\newtheorem{theorem}{Theorem}
\newtheorem{proof}{Proof}
\newtheorem{definition}{Definition}
\newtheorem{lemma}{Lemma}
\numberwithin{equation}{section}
\journal{...}
\begin{document}

\begin{frontmatter}
\title{On the complete synchronization of a time--fractional reaction--diffusion system with the Newton--Leipnik nonlinearity}
\author[a]{Djamel Mansouri}
\author[b]{Samir Bendoukha}
\author[c]{Salem Abdelmalek}
\author[d]{Amar Youkana}
\address[a]{Department of Mathematics, ICOSI laboratory, University Abbes Laghrour, Khenchela, Algeria. Email: mansouridjamel11@yahoo.fr}
\address[b]{Electrical Engineering Department, College of Engineering at Yanbu, Taibah University, Saudi Arabia. E-mail: sbendoukha@taibahu.edu.sa}
\address[c]{Department of Mathematics and Computer Science, University of Larbi Tebessi,Tebessa, 12002 Algeria. Email: sallllm@gmail.com}
\address[d]{Department of Mathematics, University of Batna $2$, Batna, Algeria. Email: youkana\_amar@yahoo.fr}

\begin{abstract}
In this paper, we consider a time--fractional reaction--diffusion system with the same nonlinearities of the Newton--Leipnik chaotic system. Through analytical tools and numerical results, we derive sufficient conditions for the asymptotic stability of the proposed model and show the existence of chaos. We also propose a nonlinear synchronization controller for a pair of systems and establish the local and global asymptotic convergence of the trajectories by means of fractional stability theory and the Lyapunov method.
\end{abstract}
\begin{keyword}
Newton--Leipnik \sep fractional chaotic system \sep reaction diffusion \sep chaos synchronization \sep complete synchronization.
\end{keyword}
\end{frontmatter}

\section{Introduction\label{SecIntro}}

Chaos has become a very common term in a number of scientific and
engineering disciplines. Over the last three decades the amount of research
publications dealing with chaotic dynamical systems and their applications
is in the thousands. One of the main reasons behind this interest is the
chaotic nature of several physical phenomena such as the weather or the
turbulent flow of fluids. Another important reason is that in certain fields
such as secure communications and data encryption, the random nature of the
chaotic system's states is a desirable property \cite%
{Kocarev2001,Dachselt2001,Masuda2002,Lawande2005,Masuda2006}. Chaotic
systems are generally characterized by their high sensitivity to variations
in the initial conditions, which can be attributed to the system having at
least one positive Lyapunov exponent. If two identical systems with the
exact same parameters start from very close points in phase space, they end
up following trajectories that move away from one another at an exponential
rate. As a result, the trajectories of such systems seem random but are
completely deterministic and can be easily replicated if the exact initial
conditions are known. Perhaps, the usefulness of chaos became more apparent
once their synchronization became possible. Synchronization refers to the
control of one chaotic system to follow the exact same trajectory of another
through adaptive rules. Among the first studies that realized the
possibility of such a controller are \cite%
{Yamada1983,Yamada1984,Afraimovich1983,Pecora1990}.

Originally, chaotic dynamical systems were considered with integer
differentiation orders. However, researchers quickly realized that
fractional calculus can improve the modeling of natural phenomena and lead
to a wider range of system dynamics and attractor types. Fractional calculus
is an old topic as it goes back to the seventeenth century. However, its
importance in modeling was only recently observed. Fractional--order systems
come with some added implementation complexity but at the same time offer a
higher level of flexibility and a wider range of chaotic trajectories. This
is mainly due to the fact that unlike integer--order differentiation, its
fractional counterpart comes with an infinite memory and thus takes into
consideration all previous states. The synchronization of fractional chaotic
systems is widely considered, see for instance \cite%
{Martinez-Guerra2014,Mahmoud2016,Maheri2016}.

Another piece of background that is important to us in this paper is that of
chaotic reaction--diffusion systems. It is easy to see that the vast
majority of studies dealing with chaos consider an ODE system where the
dependent variables represent the time evolution of certain physical
quantities. However, in \cite{Cross1993}, the authors pointed out that chaos
is particularly important in modeling and understanding the laminar and
turbulent flow of fluids. They argued that since fluid consists of a
continuum of hydrodynamic modes, it is more suitable to describe its
dynamics by means of a spatially extended system of differential equations,
i.e. a reaction--diffusion system. They studied the dynamics of the complex
Ginzburg--Landau and Kuramoto--Sivashinsky equations. Recently after that,
the authors of \cite{Lai1994} showed that the general chaotic behavior of a
reaction--diffusion system is similar to the ODE case in the sense that the
system is extremely sensitive to changes in the intial states as well as the
system's bifurcation parameters. Parekh \textit{et al.} \cite{Parekh1997}
studied the control of an autocatalytic reaction--diffusion system. They
devised a synchronization scheme and established the convergence of the
error by means of appropriate Lyapunov functionals. An interesting summary
of chaos in reaction--diffusion systems is given in \cite{Zelik2007}.
Several studies have been published recently dealing with the stabilization
and synchronization of spatio--temporal chaotic systems. For instance, it
has been shown that neural networks can exhibit chaotic dynamics \cite%
{Wang2007,Yu2011,Yang2013}. Other types of systems that may under certain
circumstances be chaotic include predator--prey models \cite{Hu2015} and the
FitzHugh--Nagumo model \cite{Zaitseva2016,Zaitseva2017}.

This paper is concerned with the chaotic dynamics and synchronization of a
reaction--diffusion system that assumes the same nonlinearities of the
Newton--Leipnik system first proposed in \cite{Leipnik1981} as a model of
the rigid body motion through linear feedback (LFRBM). The dynamics of the
original system as well as its control were studied in \cite%
{Wolf1985,Qiang2008,Jovic2011,Bendoukha2018b}. In \cite{Bendoukha2018a}, the
authors examine a reaction--diffusion version of the system. Kang \textit{et
al.} proposed a fractional version of the system in \cite{Kang2007}. They
investigated the fractional model numerically over wide parameter ranges and
commented on the impact of parameters on the system dynamics. The same
system was examined further in \cite{Sheu2008}\ and shown to exhibit complex
dynamics including fixed points, periodic motions, chaotic motions, and
transient chaos. In \cite{Khan2017}, the authors develop a disturbance
observer based adaptive sliding mode hybrid projective synchronization
scheme for the fractional system. In our work, we consider a combination of
the above mentioned properties in a single system. The proposed
fractional--time reaction--diffusion Newton--Leipnik system is investigated
analytically and experimentally in terms of its dynamics and synchronization.

\section{On Fractional Calculus and Stability}

Since the reader may not be very familiar with some of the notation and
terminology used throughout this paper with regards to fractional calculus,
it seems suitable to provide the following definitions and lemma along with
helpful references.

\begin{definition}
\label{Def1}\cite{Kilbas2006} The Liouville fractional derivative of order $%
\delta$\ of an integrable function $f\left( t\right) $ is defined as%
\begin{equation}
_{t_{0}}D_{t}^{-\delta}f\left( t\right) =\frac{1}{\Gamma \left( \delta
\right) }\int_{t_{0}}^{t}\frac{f\left( \tau \right) }{\left( t-\tau \right)
^{1-\delta}}d\tau.  \label{1.1}
\end{equation}
\end{definition}

\begin{definition}
\label{Def2}\cite{Kilbas2006} The Caputo fractional derivative of order $%
\delta>0$ of a function $f\ $of class $C^{n}$ for $t>t_{0}$ is defined as%
\begin{equation}
_{t_{0}}^{C}D_{t}^{\delta}f\left( t\right) =\frac{1}{\Gamma \left( n-\delta
\right) }\int_{t_{0}}^{t}\frac{f^{\left( n\right) }\left( \tau \right) }{%
\left( t-\tau \right) ^{\delta-n-1}}d\tau,  \label{1.2}
\end{equation}
with $n=\min \left \{ k\in%
%TCIMACRO{\U{2115} }%
%BeginExpansion
\mathbb{N}
%EndExpansion
\ |\ k>\delta \right \} $ and $\Gamma$\ representing the gamma function.
\end{definition}

\begin{definition}
\label{Def3}\cite{Li2010} The constant $x_{0}$ is considered to be an
equilibrium for the Caputo fractional nonautonomous dynamic system%
\begin{equation}
_{t_{0}}^{C}D_{t}^{\delta }x\left( t\right) =f\left( t,x\right) ,
\label{1.3}
\end{equation}%
if and only if 
\begin{equation}
f\left( t,x_{0}\right) =0.  \label{1.4}
\end{equation}
\end{definition}

\begin{lemma}
\label{Lemma1}\cite{AguilaCamacho2014} Let $x\left( t\right) $ be a
continuous and differentiable real function. For any time instant $t\geq
t_{0}$,%
\begin{equation}
_{t_{0}}^{C}D_{t}^{\delta }x^{2}\left( t\right) \leq 2x\left( t\right)
_{t_{0}}^{C}D_{t}^{\delta }x\left( t\right) ,  \label{1.5}
\end{equation}%
with $\delta \in \left( 0,1\right) $.
\end{lemma}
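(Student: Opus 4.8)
The plan is to prove the inequality $_{t_{0}}^{C}D_{t}^{\delta }x^{2}(t)\leq 2x(t)\,{}_{t_{0}}^{C}D_{t}^{\delta }x(t)$ by writing both sides explicitly as Caputo integrals and showing that the difference is nonnegative. Using Definition \ref{Def2} with $n=1$, the right-hand side minus the left-hand side equals
\begin{equation}
\frac{1}{\Gamma (1-\delta )}\int_{t_{0}}^{t}\frac{2x(t)x^{\prime }(\tau )-2x(\tau )x^{\prime }(\tau )}{(t-\tau )^{\delta }}\,d\tau
=\frac{2}{\Gamma (1-\delta )}\int_{t_{0}}^{t}\frac{\bigl(x(t)-x(\tau )\bigr)x^{\prime }(\tau )}{(t-\tau )^{\delta }}\,d\tau ,
\end{equation}
so it suffices to show this last integral is nonnegative. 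The idea is to introduce the auxiliary function $y(\tau )=x(t)-x(\tau )$ (with $t$ fixed), so that $x^{\prime }(\tau )=-y^{\prime }(\tau )$ and $y(t)=0$; the integrand becomes $-y(\tau )y^{\prime }(\tau )/(t-\tau )^{\delta }=-\tfrac12\bigl(y^{2}(\tau )\bigr)^{\prime }/(t-\tau )^{\delta }$.

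Next I would integrate by parts in the variable $\tau $. Writing $u(\tau )=(t-\tau )^{-\delta }$ and $dv=-\tfrac12(y^2)^{\prime }(\tau )\,d\tau $, the boundary term at $\tau =t$ must be handled carefully because $u$ is singular there; however, since $y(t)=0$ and $y$ is differentiable, $y^{2}(\tau )=O\bigl((t-\tau )^{2}\bigr)$ near $\tau =t$, which beats the singularity $(t-\tau )^{-\delta }$ for $\delta \in (0,1)$, so the boundary contribution at $\tau =t$ vanishes. The boundary term at $\tau =t_{0}$ gives $\tfrac12 (t-t_{0})^{-\delta }y^{2}(t_{0})\geq 0$. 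The remaining integral is
\begin{equation}
-\frac{1}{\Gamma(1-\delta)}\int_{t_{0}}^{t}\frac{\delta}{2}\,(t-\tau)^{-\delta-1}y^{2}(\tau)\,d\tau\cdot(-1),
\end{equation}
and tracking the sign from the derivative of $u$, namely $u^{\prime }(\tau )=\delta (t-\tau )^{-\delta -1}>0$, this residual integral is a positive constant times $\int_{t_{0}}^{t}(t-\tau )^{-\delta -1}y^{2}(\tau )\,d\tau \geq 0$. Hence the whole expression is a sum of nonnegative terms, which establishes the inequality.

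The main obstacle I anticipate is making the integration-by-parts step rigorous near the singular endpoint $\tau =t$: one must justify that the boundary term there truly vanishes and that the resulting (now more singular) integral still converges. The clean way to do this is to integrate over $[t_{0},t-\varepsilon ]$ first, obtain the identity with an explicit $\varepsilon $-boundary term $\tfrac12\varepsilon ^{-\delta }y^{2}(t-\varepsilon )$, use the bound $|y(t-\varepsilon )|\le \varepsilon \sup |x^{\prime }|$ to see this term is $O(\varepsilon ^{2-\delta })\to 0$, and then let $\varepsilon \to 0^{+}$, invoking monotone convergence for the residual integral. An alternative, if one prefers to avoid the endpoint analysis entirely, is to differentiate the desired inequality: define $g(t)={}_{t_{0}}^{C}D_{t}^{\delta }\bigl(2x(t)x(\cdot)-x^{2}(\cdot)\bigr)$ evaluated appropriately, but the direct computation above is the most transparent route and is the approach I would write up.
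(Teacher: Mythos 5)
The paper states this lemma without proof, citing \cite{AguilaCamacho2014}, and your argument is correct and essentially reproduces the proof given in that reference: writing the difference as $\tfrac{2}{\Gamma(1-\delta)}\int_{t_0}^{t}(x(t)-x(\tau))x'(\tau)(t-\tau)^{-\delta}\,d\tau$, substituting $y(\tau)=x(t)-x(\tau)$, integrating by parts on $[t_0,t-\varepsilon]$, killing the singular boundary term via $y^{2}(t-\varepsilon)=O(\varepsilon^{2})$ so that it is $O(\varepsilon^{2-\delta})\to 0$, and observing that the boundary term at $t_0$ and the residual integral $\tfrac{\delta}{2}\int_{t_0}^{t}y^{2}(\tau)(t-\tau)^{-\delta-1}\,d\tau$ are both nonnegative (and the latter convergent, since its integrand is $O((t-\tau)^{1-\delta})$ near $\tau=t$). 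One incidental remark: your computation correctly uses the standard Caputo kernel $(t-\tau)^{n-\delta-1}$ with $n=1$, whereas the exponent printed in the paper's Definition~\ref{Def2} contains a sign typo; your version is the intended one.
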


We say that the constant $\left( u^{\ast },v^{\ast }\right) $ is an
equilibrium for the Caputo fractional non--autonomous dynamic system%
\begin{equation}
\left \{ 
\begin{array}{l}
_{t_{0}}^{C}D_{t}^{\delta _{1}}u=F\left( u,v\right) ,\text{ \  \  \  \  \  \ in }%
\mathbb{R}^{+},\medskip \\ 
_{t_{0}}^{C}D_{t}^{\delta _{2}}v=G\left( u,v\right) ,\text{ \  \  \  \  \ in }%
\mathbb{R}^{+},%
\end{array}%
\right.  \label{1.5.1}
\end{equation}%
if and only if 
\begin{equation}
F\left( u^{\ast },v^{\ast }\right) =G\left( u^{\ast },v^{\ast }\right) =0.
\label{1.5.2}
\end{equation}%
Assuming a fractional order system comprising of two differential equations
with an equilibrium $\left( u^{\ast },v^{\ast }\right) $ and Jacobian matrix 
$J\left( u^{\ast },v^{\ast }\right) $ evaluated at $\left( u^{\ast },v^{\ast
}\right) $, the following local and global asymptotic stability results are
important.

\begin{lemma}
\label{Lemma2}\cite{Matignon1996} Assuming $\delta _{1}=\delta _{2}=\delta $%
, the equilibrium point $\left( u^{\ast },v^{\ast }\right) $ is locally
asymptotically stable iff%
\begin{equation}
\left \vert \arg \left( \lambda _{i}\right) \right \vert >\frac{\delta \pi }{%
2},\  \  \ i=1,2,  \label{1.6}
\end{equation}%
where $\lambda _{i}$ are the eigenvalues of $J\left( u^{\ast },v^{\ast
}\right) $ and $\arg \left( \cdot \right) $ denotes the argument of a
complex number.
\end{lemma}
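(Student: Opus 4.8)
The plan is to reduce the stability question to the linearised equation and then analyse that equation explicitly through the Laplace transform and the matrix Mittag--Leffler function. First I would set $w=\left(u-u^{\ast},v-v^{\ast}\right)^{T}$, expand $F$ and $G$ about $\left(u^{\ast},v^{\ast}\right)$, and use $F\left(u^{\ast},v^{\ast}\right)=G\left(u^{\ast},v^{\ast}\right)=0$ to obtain ${}_{t_{0}}^{C}D_{t}^{\delta}w=Jw+R\left(w\right)$, where $J=J\left(u^{\ast},v^{\ast}\right)$ and $\left\Vert R\left(w\right)\right\Vert =o\left(\left\Vert w\right\Vert \right)$ as $w\to 0$. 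It then suffices to show that the condition $\left\vert \arg\left(\lambda_{i}\right)\right\vert >\delta\pi/2$, $i=1,2$, is exactly the criterion for asymptotic stability of the linear part ${}_{t_{0}}^{C}D_{t}^{\delta}w=Jw$, and that under the strict inequalities this carries over to the full system.

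For the linear equation I would take $t_{0}=0$ and apply the Laplace transform, using $\mathcal{L}\{{}_{0}^{C}D_{t}^{\delta}w\}\left(s\right)=s^{\delta}\widehat{w}\left(s\right)-s^{\delta-1}w\left(0\right)$; this gives $\left(s^{\delta}I-J\right)\widehat{w}\left(s\right)=s^{\delta-1}w\left(0\right)$ and hence $w\left(t\right)=E_{\delta}\left(Jt^{\delta}\right)w\left(0\right)$, where $E_{\delta}$ is the matrix Mittag--Leffler function, so that after putting $J$ in Jordan form the components of $w$ are finite combinations of $t^{k}E_{\delta}^{\left(k\right)}\left(\lambda_{i}t^{\delta}\right)$. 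I would then invoke the classical asymptotics of the Mittag--Leffler function: on any closed sector $\left\vert \arg\left(z\right)\right\vert \geq\mu$ with $\delta\pi/2<\mu\leq\pi$ one has $E_{\delta}\left(z\right)=O\left(1/\left\vert z\right\vert \right)$ as $\left\vert z\right\vert \to\infty$, while near the positive real ray $E_{\delta}\left(z\right)$ grows like $\exp\left(z^{1/\delta}\right)$. Consequently $E_{\delta}\left(\lambda_{i}t^{\delta}\right)\to 0$ precisely when $\left\vert \arg\left(\lambda_{i}\right)\right\vert >\delta\pi/2$, the mode grows exponentially when $\left\vert \arg\left(\lambda_{i}\right)\right\vert <\delta\pi/2$, and the borderline value yields a bounded, non--decaying mode. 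Equivalently, shifting the Bromwich contour shows that $\det\left(s^{\delta}I-J\right)=0$ has a root with $\Real\left(s\right)\geq 0$ on the principal sheet iff some $\left\vert \arg\left(\lambda_{i}\right)\right\vert \leq\delta\pi/2$; this settles the equivalence for the linear equation.

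To transfer sufficiency to the nonlinear system I would use the mild formulation
\begin{equation}
w\left(t\right)=E_{\delta}\left(Jt^{\delta}\right)w\left(0\right)+\int_{0}^{t}\left(t-\tau\right)^{\delta-1}E_{\delta,\delta}\left(J\left(t-\tau\right)^{\delta}\right)R\left(w\left(\tau\right)\right)\,d\tau .
\end{equation}
Under the strict inequalities the asymptotics above give uniform operator bounds $\left\Vert E_{\delta}\left(Jt^{\delta}\right)\right\Vert \leq M\left(1+t^{\delta}\right)^{-1}$ and a similar one for $E_{\delta,\delta}$; combining these with $\left\Vert R\left(w\right)\right\Vert \leq\varepsilon\left\Vert w\right\Vert$ valid for $\left\Vert w\right\Vert$ small, and applying a fractional Gronwall inequality, would yield local asymptotic stability. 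Conversely, when some $\left\vert \arg\left(\lambda_{i}\right)\right\vert <\delta\pi/2$ the instability of the linear part propagates to the nonlinear system, so the criterion is also necessary.

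The hard part will be the Mittag--Leffler analysis: establishing the precise sectorial asymptotics, handling the borderline angle $\left\vert \arg\left(\lambda_{i}\right)\right\vert =\delta\pi/2$, and upgrading the pointwise decay of the scalar modes to a uniform operator bound on $E_{\delta}\left(Jt^{\delta}\right)$ when $J$ has nontrivial Jordan blocks, since the polynomial prefactors $t^{k}$ must then be absorbed by the algebraic decay $O\left(1/\left\vert z\right\vert \right)$. A secondary difficulty is that the convolution kernel $\left(t-\tau\right)^{\delta-1}$ in the mild formulation is not controlled by the ordinary Gronwall lemma, so its fractional analogue, or a weighted fixed--point argument in a Bielecki--type norm, is needed to close the estimate.
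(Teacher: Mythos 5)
The paper does not actually prove this lemma: it is quoted verbatim from Matignon's 1996 paper, so there is no internal proof to compare against. Your sketch reconstructs the standard proof of Matignon's theorem, and the linear half of it is sound: Laplace transforming ${}_{0}^{C}D_{t}^{\delta}w=Jw$ to get $w(t)=E_{\delta}(Jt^{\delta})w(0)$, reducing to Jordan blocks, and invoking the sectorial asymptotics $E_{\delta}(z)=O(1/|z|)$ for $|\arg(z)|\geq\mu>\delta\pi/2$ versus exponential growth of $\tfrac{1}{\delta}\exp(z^{1/\delta})$ near the ray $\arg(z)=0$ is exactly how the equivalence is established for the linear system, and the polynomial prefactors from nontrivial Jordan blocks are indeed absorbed by the algebraic decay of $E_{\delta}^{(k)}$.

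The genuine gap is in the transfer to the nonlinear system, which you treat as a routine perturbation step. First, the decay of $E_{\delta}(Jt^{\delta})$ is only algebraic, of order $t^{-\delta}$, not exponential, so the classical ``linearized stability'' argument does not close the way it does for ODEs: with only $\Vert R(w)\Vert\leq\varepsilon\Vert w\Vert$ and an $L^{1}$ kernel $(t-\tau)^{\delta-1}E_{\delta,\delta}(J(t-\tau)^{\delta})$ you can get boundedness of small solutions, but extracting $w(t)\to 0$ requires a Lyapunov--Perron type fixed-point argument in a carefully weighted space; this was only settled rigorously by Cong, Doan, Siegmund and Tuan some twenty years after Matignon, and the instability direction (propagating growth of an unstable linear mode to the nonlinear system) is a separate nontrivial theorem of the same authors. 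Second, the ``iff'' as stated cannot be proved by your route (or any route) for the nonlinear system: when some eigenvalue sits exactly on the critical rays $|\arg(\lambda_{i})|=\delta\pi/2$ the linearization gives a bounded non-decaying mode and the nonlinear terms decide stability, exactly as in the center case for ODEs, so necessity of the strict inequality for local asymptotic stability of the nonlinear equilibrium fails in general. These are not secondary difficulties to be absorbed by ``a fractional Gronwall inequality''; they are the substance of the theorem once one goes beyond the linear case.
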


\begin{lemma}
\label{Lemma3}\cite{Deng2007} Assuming $\delta _{i}=\frac{l_{i}}{m_{i}}%
,i=1,2,$\ with $\left( l_{i},m_{i}\right) =1$ and $l_{i},m_{i}\in 
%TCIMACRO{\U{2115} }%
%BeginExpansion
\mathbb{N}
%EndExpansion
$, the equilibrium point $\left( u^{\ast },v^{\ast }\right) $ is locally
asymptotically stable iff all the roots $\lambda $ of the characteristic
equation%
\begin{equation}
\det \left( \text{diag}\left( \lambda ^{m\delta _{1}},\lambda ^{m\delta
_{2}}\right) -J\left( u^{\ast },v^{\ast }\right) \right) =0,  \label{1.7}
\end{equation}%
satisfy%
\begin{equation}
\left \vert \arg \left( \lambda \right) \right \vert >\frac{\pi }{2m},
\label{1.8}
\end{equation}%
where $m$ is the least common multiple of the denominators $m_{i}$.
\end{lemma}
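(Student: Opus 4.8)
\medskip
\noindent\emph{Proposed proof.} The plan is to reduce the two--equation multi--order problem obtained by linearizing \eqref{1.5.1} to a single \emph{commensurate}--order system of order $1/m$, and then to apply Matignon's criterion (Lemma \ref{Lemma2}) to that enlarged system. First I would linearize \eqref{1.5.1} at $(u^{\ast },v^{\ast })$: setting $\xi =(u-u^{\ast },\,v-v^{\ast })^{T}$, the leading--order dynamics near the equilibrium are ${}_{t_{0}}^{C}D_{t}^{\delta _{i}}\xi _{i}=(J\xi )_{i}$, $i=1,2$, with $J=J(u^{\ast },v^{\ast })$; by the principle of linearized stability for Caputo systems (which follows from the Mittag--Leffler variation--of--constants representation of the solution) local asymptotic stability of the nonlinear system \eqref{1.5.1} is equivalent to asymptotic stability of this linear system, provided no characteristic root lies exactly on the boundary of the relevant stability sector. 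Taking Laplace transforms already indicates why the exponents $m\delta _{i}$ should appear: one gets $\bigl(\operatorname{diag}(s^{\delta _{1}},s^{\delta _{2}})-J\bigr)\widehat{\xi }(s)=(\text{initial data})$, and after the substitution $s=\lambda ^{m}$ the relevant characteristic function becomes $\det\bigl(\operatorname{diag}(\lambda ^{m\delta _{1}},\lambda ^{m\delta _{2}})-J\bigr)$.

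To make this rigorous I would pass to the enlarged system as follows. Write $\delta _{i}=l_{i}/m_{i}$, let $m=\operatorname{lcm}(m_{1},m_{2})$, and put $\gamma _{i}:=m\delta _{i}$, which is a positive integer strictly less than $m$. For each $i$ introduce the chain of auxiliary functions $\xi _{i}^{(k)}:={}_{t_{0}}^{C}D_{t}^{k/m}\xi _{i}$ for $k=0,1,\dots ,\gamma _{i}-1$. The composition property of fractional derivatives gives ${}_{t_{0}}^{C}D_{t}^{1/m}\xi _{i}^{(k)}=\xi _{i}^{(k+1)}$ for $k<\gamma _{i}-1$, while ${}_{t_{0}}^{C}D_{t}^{1/m}\xi _{i}^{(\gamma _{i}-1)}={}_{t_{0}}^{C}D_{t}^{\delta _{i}}\xi _{i}=\sum _{j}J_{ij}\xi _{j}^{(0)}$. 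Stacking the $N:=\gamma _{1}+\gamma _{2}$ functions into a vector $Z$ yields a commensurate--order system ${}_{t_{0}}^{C}D_{t}^{1/m}Z=AZ$, where $A$ is the block matrix built from unit--shift blocks together with the rows carrying the entries of $J$.

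Now Matignon's theorem (Lemma \ref{Lemma2}) applied to ${}_{t_{0}}^{C}D_{t}^{1/m}Z=AZ$ with order $1/m$ says that the enlarged system --- hence the linearization, and hence (away from the boundary case) the nonlinear system --- is asymptotically stable if and only if every eigenvalue $\mu$ of $A$ satisfies $|\arg \mu |>\pi /(2m)$. It remains to identify $\operatorname{spec}(A)$ with the roots of \eqref{1.7}. A block--companion cofactor expansion of $\det (\lambda I_{N}-A)$ --- obtained by successively eliminating the auxiliary rows of each chain, equivalently by a Schur--complement argument in which $\operatorname{diag}(\lambda ^{\gamma _{1}},\lambda ^{\gamma _{2}})$ is exactly the block that survives after reducing out the $\xi _{i}^{(k)}$ with $k\geq 1$ --- gives $\det (\lambda I_{N}-A)=\det\bigl(\operatorname{diag}(\lambda ^{\gamma _{1}},\lambda ^{\gamma _{2}})-J\bigr)=\det\bigl(\operatorname{diag}(\lambda ^{m\delta _{1}},\lambda ^{m\delta _{2}})-J\bigr)$, which is precisely the left--hand side of \eqref{1.7}. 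Thus the eigenvalues of $A$ are exactly the roots $\lambda $ of \eqref{1.7}, and condition \eqref{1.8} is Matignon's condition for $A$, which proves the claim.

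I expect the two delicate points to be the following. First, the block--companion determinant identity: tracking the signs and the interleaving of the two shift--chains when $\gamma _{1}\neq \gamma _{2}$ is routine but error--prone, and is cleanest done either by induction on $\gamma _{1}+\gamma _{2}$ or through the Schur complement mentioned above. Second, and more substantively, the reduction from the nonlinear system \eqref{1.5.1} to its linearization has to be justified by estimating the neglected higher--order terms against the decay $\|E_{1/m}(At^{1/m})\|$ of the linear fractional semigroup, and one must be careful about the non--hyperbolic borderline $|\arg \mu |=\pi /(2m)$, where the statement degenerates to the usual asymptotic/unstable dichotomy up to that critical case.
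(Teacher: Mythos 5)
First, note that the paper itself offers no proof of this lemma: it is imported verbatim from the cited reference \cite{Deng2007}, so there is no in--text argument to compare yours against line by line. Your strategy --- augment the incommensurate linearized system to a commensurate one of order $1/m$ and invoke Matignon's criterion (Lemma \ref{Lemma2}) on the enlarged matrix $A$ --- is a genuinely different route from the one in the source, which works directly on the Laplace transform of the linear system, shows that asymptotic stability is equivalent to every zero $s$ of $\det\bigl(\mathrm{diag}(s^{\delta _{1}},s^{\delta _{2}})-J\bigr)$ having negative real part, and then obtains \eqref{1.7}--\eqref{1.8} by the substitution $s=\lambda ^{m}$, under which $\left\vert \arg (s)\right\vert >\pi /2$ becomes exactly $\left\vert \arg (\lambda )\right\vert >\pi /(2m)$. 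Your block--companion determinant identity and the identification of the spectrum of $A$ with the roots of \eqref{1.7} are correct and routine; the augmentation buys a statement to which Lemma \ref{Lemma2} applies verbatim, while the Laplace route avoids the enlargement entirely.

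The one step that fails as written is the composition property used to build the chains: for Caputo derivatives it is not true in general that ${}_{t_{0}}^{C}D_{t}^{1/m}\bigl({}_{t_{0}}^{C}D_{t}^{k/m}\xi _{i}\bigr)={}_{t_{0}}^{C}D_{t}^{(k+1)/m}\xi _{i}$; the semigroup law holds for the Riemann--Liouville integral, not for the Caputo derivative, and the composition produces correction terms unless the initial values of the intermediate (auxiliary) states vanish. Consequently the enlarged system ${}^{C}D^{1/m}Z=AZ$ is equivalent to the original linearization only after either (i) restricting to initial data for which those correction terms vanish, (ii) reformulating with sequential derivatives and verifying that Matignon's criterion still applies in that setting, or (iii) abandoning the augmentation in favour of the Laplace--transform argument, where ${}^{C}D^{\delta _{i}}\xi _{i}$ transforms exactly to $s^{\delta _{i}}\widehat{\xi }_{i}(s)-s^{\delta _{i}-1}\xi _{i}(t_{0})$ and no composition identity is needed. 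The remaining caveats you flag yourself --- the passage from the nonlinear system \eqref{1.5.1} to its linearization, and the borderline case $\left\vert \arg (\lambda )\right\vert =\pi /(2m)$, which also affects the ``only if'' direction of the stated equivalence --- are genuine but standard, and are likewise not addressed in the paper.
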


\begin{lemma}
\label{Lemma4}If there exists a positive definite Lyapunov function 
\begin{equation}
V(U)=\frac{1}{2}U^{T}\left( t\right) U\left( t\right)  \label{1.9}
\end{equation}%
such that 
\begin{equation}
D_{t}^{\overline{\delta }}V\left( U\right) <0  \label{1.10}
\end{equation}%
for all $t\geq t_{0}$, then the trivial solution of system 
\begin{equation}
D_{t}^{\overline{\delta }}U=F\left( U\right) ,  \label{1.11}
\end{equation}%
where $F:%
%TCIMACRO{\U{211d} }%
%BeginExpansion
\mathbb{R}
%EndExpansion
^{n}\rightarrow 
%TCIMACRO{\U{211d} }%
%BeginExpansion
\mathbb{R}
%EndExpansion
^{n}$, $U=\left( u_{1},u_{2},...,u_{n}\right) $, $\overline{\delta }=\left(
\delta _{1},\delta _{2},...,\delta _{n}\right) $, and\ $0<\delta _{i}\leq 1$%
,\ is globally asymptotically stable.
\end{lemma}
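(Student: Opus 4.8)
The plan is to treat $V$ as a Lyapunov functional in the spirit of the fractional direct method, combining the positivity and radial unboundedness of $V$, the pointwise inequality of Lemma~\ref{Lemma1}, and a fractional comparison argument. First I would record the structural facts about $V(U)=\tfrac12 U^{T}U=\tfrac12\sum_{i=1}^{n}u_{i}^{2}$: it is continuous, $V(0)=0$, $V(U)>0$ for $U\neq 0$, and $V(U)=\tfrac12\|U\|^{2}$ is bounded above and below by the class-$\mathcal K_\infty$ function $r\mapsto\tfrac12 r^{2}$, so that its sublevel sets are compact and $V(U)\to\infty$ as $\|U\|\to\infty$ (this last point being exactly what upgrades the conclusion from local to \emph{global}). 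Next I would make precise the quantity $D_{t}^{\overline{\delta}}V(U)$: writing $W_i(t)=\tfrac12 u_i^2(t)$ and applying Lemma~\ref{Lemma1} to each component together with the system \eqref{1.11},
\[
{}_{t_0}^{C}D_t^{\delta_i}W_i(t)\;\le\;u_i(t)\,{}_{t_0}^{C}D_t^{\delta_i}u_i(t)\;=\;u_i(t)F_i(U(t)),
\]
and summing over $i$ gives $\sum_{i=1}^{n}{}_{t_0}^{C}D_t^{\delta_i}W_i(t)\le U^{T}F(U)=D_t^{\overline{\delta}}V(U)<0$ along every trajectory, i.e.\ the (vector-)fractional derivative of $V$ is strictly negative off the origin.

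I would then run the comparison step. Since the Caputo derivative of each summand of $V$ is nonpositive, the fractional comparison principle gives $V(U(t))\le V(U(t_0))$ for all $t\ge t_0$, which already yields Lyapunov stability and confines each trajectory to the compact set $K=\{V\le V(U(t_0))\}$. For attractivity I would combine this with a fractional LaSalle-type argument: since $D_t^{\overline\delta}V(U)<0$ for $U\neq 0$ and $U\mapsto D_t^{\overline\delta}V(U)$ is continuous on $K$, a trajectory that failed to converge to $0$ would, on a time set of infinite measure, satisfy $D_t^{\overline\delta}V(U(t))\le-\eta$ for some $\eta>0$; feeding this into the fractional comparison lemma with the constant right-hand side $-\eta$ (whose comparison solution behaves like $V(U(t_0))-\eta(t-t_0)^{\delta}/\Gamma(\delta+1)$ and therefore eventually becomes negative) contradicts $V\ge 0$. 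Hence $V(U(t))\to 0$, and by the lower class-$\mathcal K_\infty$ bound $\|U(t)\|\to 0$; together with Lyapunov stability this is the global asymptotic stability of the trivial solution of \eqref{1.11}.

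The main obstacle I anticipate is the bookkeeping forced by the \emph{incommensurate} orders $\delta_1,\dots,\delta_n$: Lemma~\ref{Lemma1} can only be applied one order at a time, so one does not get a single clean scalar inequality $D_t^{\delta}V\le D_t^{\overline\delta}V$ but only the summed inequality above, and the comparison principle must be invoked componentwise (or after passing to a common denominator of the $\delta_i$ in the style of Lemma~\ref{Lemma3}). The genuinely delicate point is the step that upgrades the \emph{pointwise strict} inequality $D_t^{\overline\delta}V<0$ into a decay estimate robust enough to defeat the slowly decaying Caputo kernel; if one instead strengthens the hypothesis to $D_t^{\overline\delta}V\le-W(U)$ for a positive-definite $W$ --- which is exactly what the synchronization construction in the sequel actually produces --- this step reduces to a direct appeal to the fractional Lyapunov/LaSalle results of \cite{Li2010,AguilaCamacho2014} and the argument becomes short and routine.
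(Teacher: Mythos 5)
You should first note that the paper itself does not prove Lemma~\ref{Lemma4}: it is stated without proof or citation and is used as an imported tool in the spirit of the fractional Lyapunov direct method of \cite{Li2010,AguilaCamacho2014}, so there is no in-paper argument to compare yours against. Judged on its own terms, your sketch follows the standard route --- positive definiteness and radial unboundedness of $V=\tfrac12\|U\|^{2}$, the inequality of Lemma~\ref{Lemma1} (equivalently Lemma~\ref{Lemma5}), and a fractional comparison step --- and you correctly locate the difficulties; but the two points you flag as ``delicate'' are genuine gaps, not bookkeeping. First, in the incommensurate case the quantity you actually control is $\sum_{i}{}_{t_0}^{C}D_{t}^{\delta_i}W_i(t)\le U^{T}F(U)$ with \emph{different} orders $\delta_i$. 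The scalar fractional comparison principle (${}^{C}D_{t}^{\delta}v\le 0\Rightarrow v(t)\le v(t_0)$) applies to a single order; negativity of the sum does not give negativity of each summand, so the componentwise invocation is unavailable, and the ``common denominator'' device belongs to the spectral analysis of linear systems (Lemma~\ref{Lemma3}), not to comparison of Lyapunov functionals. As written you have not even established $V(U(t))\le V(U(t_0))$.

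Second, the attractivity step is not a valid comparison argument. The hypothesis $D_{t}^{\overline{\delta}}V<0$ is a pointwise strict inequality with no uniform margin; extracting an $\eta>0$ presupposes that the trajectory remains in a compact set bounded away from the origin, which is stronger than ``fails to converge to $0$.'' More importantly, the fractional comparison lemma requires the differential inequality to hold on an entire interval $[t_0,t]$ --- the Caputo derivative at time $t$ integrates the whole history --- so an estimate valid only on a time set of infinite measure cannot be fed into it to produce the decay $V(U(t_0))-\eta(t-t_0)^{\delta}/\Gamma(\delta+1)$. Your closing remark is the honest diagnosis: under the strengthened hypothesis $D_{t}^{\overline{\delta}}V\le -W(U)$ with $W$ positive definite --- which is what Part~III of the proof of Theorem~\ref{Theo2} actually produces, since the term $J$ there is a negative definite quadratic form in $e$ --- the conclusion follows from the Mittag--Leffler stability theorems of \cite{Li2010}. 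But that is a different lemma from the one stated, and the statement as given (strict inequality ``for all $t\ge t_0$,'' which cannot hold at the equilibrium itself) would need to be repaired before either your argument or the cited theory applies.
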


\begin{lemma}
\label{Lemma5}If $U\left( t\right) \in 
%TCIMACRO{\U{211d} }%
%BeginExpansion
\mathbb{R}
%EndExpansion
^{n}$, $\overline{\delta }=\left( \delta _{1},\delta _{2},...,\delta
_{n}\right) $, and $0<\delta _{i}\leq 1$, then%
\begin{equation}
\frac{1}{2}D_{t}^{\overline{\delta }}U^{T}\left( t\right) U\left( t\right)
\leq U^{T}\left( t\right) D_{t}^{\overline{\delta }}U\left( t\right) .
\label{1.12}
\end{equation}
\end{lemma}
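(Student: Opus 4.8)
The plan is to reduce the vector inequality to $n$ scalar ones and then invoke Lemma~\ref{Lemma1} coordinatewise. First recall that the multi--order symbol $D_t^{\overline{\delta}}$ in \eqref{1.12} is shorthand for the operator that acts on the components of $U=(u_1,\dots,u_n)$ separately, i.e. $D_t^{\overline{\delta}}U=\big({}_{t_0}^{C}D_t^{\delta_1}u_1,\dots,{}_{t_0}^{C}D_t^{\delta_n}u_n\big)$, and the scalar quantity $U^{T}U=\sum_{i=1}^{n}u_i^{2}$ is to be differentiated term by term, each summand $u_i^{2}$ with the order $\delta_i$ attached to that coordinate, so that $D_t^{\overline{\delta}}\big(U^{T}U\big)=\sum_{i=1}^{n}{}_{t_0}^{C}D_t^{\delta_i}u_i^{2}$. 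With this reading, \eqref{1.12} is precisely
\begin{equation}
\frac{1}{2}\sum_{i=1}^{n}{}_{t_0}^{C}D_t^{\delta_i}u_i^{2}\left(t\right)\le\sum_{i=1}^{n}u_i\left(t\right)\,{}_{t_0}^{C}D_t^{\delta_i}u_i\left(t\right).
\end{equation}

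Next I would fix an index $i$ and work with the single coordinate $u_i$. Since $U$ is a continuous and differentiable $\mathbb{R}^{n}$--valued function, each $u_i$ is continuous and differentiable, so Lemma~\ref{Lemma1} applies for every $\delta_i\in(0,1)$ and gives ${}_{t_0}^{C}D_t^{\delta_i}u_i^{2}(t)\le 2u_i(t)\,{}_{t_0}^{C}D_t^{\delta_i}u_i(t)$ for all $t\ge t_0$. The only case not covered directly is the endpoint $\delta_i=1$, which is permitted in \eqref{1.12} but excluded from Lemma~\ref{Lemma1}; there the Caputo derivative reduces to the ordinary first derivative and the classical chain rule yields the \emph{equality} $\tfrac12\frac{d}{dt}u_i^{2}(t)=u_i(t)\frac{d}{dt}u_i(t)$, so the inequality still holds. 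Hence $\tfrac12\,{}_{t_0}^{C}D_t^{\delta_i}u_i^{2}(t)\le u_i(t)\,{}_{t_0}^{C}D_t^{\delta_i}u_i(t)$ for every $\delta_i\in(0,1]$ and every $t\ge t_0$.

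Finally I would sum these $n$ estimates over $i=1,\dots,n$. Because finite sums commute with the coordinatewise fractional operators, the left--hand side becomes $\tfrac12\,D_t^{\overline{\delta}}\big(U^{T}U\big)$ and the right--hand side becomes $U^{T}D_t^{\overline{\delta}}U$, which is exactly \eqref{1.12}. I do not expect a genuine analytic obstacle: the argument is a bookkeeping reduction to Lemma~\ref{Lemma1}. The only mild subtlety worth making explicit in the write--up is the interpretation of $D_t^{\overline{\delta}}$ acting on the scalar quadratic form $U^{T}U$ (it must be the term--by--term action above for the statement to be meaningful), together with the separate, elementary treatment of the boundary order $\delta_i=1$ that Lemma~\ref{Lemma1} does not cover.
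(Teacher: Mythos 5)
Your argument is correct. The paper itself states Lemma~\ref{Lemma5} without proof or citation (it is the standard vector/multi--order extension of the Aguila--Camacho inequality of Lemma~\ref{Lemma1}), so there is no authorial proof to diverge from; your coordinatewise reduction is exactly the intended reading, and it matches how the lemma is actually invoked in Part~III of the proof of Theorem~\ref{Theo2}, where $D_t^{\delta}V$ is expanded as $\tfrac{1}{2}\int_{\Omega}\sum_i D_t^{\delta}e_i^2\,dx$ and bounded term by term. Your two explicit clarifications --- the term--by--term meaning of $D_t^{\overline{\delta}}\left(U^{T}U\right)$ and the separate elementary treatment of the endpoint $\delta_i=1$, which Lemma~\ref{Lemma1} excludes --- are both worth keeping, since the lemma as stated is silent on them (note also that the differentiability of $U$ that you use is an implicit hypothesis inherited from Lemma~\ref{Lemma1} rather than one stated in Lemma~\ref{Lemma5}).
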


\begin{lemma}
\label{Lemma6}Consider the fractional--order system%
\begin{equation}
D^{\beta }\varphi \left( t\right) =f\left( \varphi \left( t\right) \right) ,
\label{1.13}
\end{equation}%
where $0<\delta <1$, with $\varphi \left( t\right) $ $\in $ $%
%TCIMACRO{\U{211d} }%
%BeginExpansion
\mathbb{R}
%EndExpansion
$ and $\varphi ^{\ast }=0$ as its equilibrium. If for any $\varphi \left(
t\right) $,%
\begin{equation}
\varphi \left( t\right) f\left( \varphi \left( t\right) \right) \leq 0,
\label{1.14}
\end{equation}%
then $\varphi ^{\ast }$ is asymptotically stable. Moreover, if for any $%
\varphi \left( t\right) \neq 0$,%
\begin{equation}
\varphi \left( t\right) f\left( \varphi \left( t\right) \right) <0,
\label{1.15}
\end{equation}%
then $\varphi ^{\ast }$ is asymptotically stable.
\end{lemma}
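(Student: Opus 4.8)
The plan is to use the quadratic Lyapunov candidate $V(\varphi)=\tfrac{1}{2}\varphi^{2}$, which is continuously differentiable, positive definite and radially unbounded on $\mathbb{R}$, together with the scalar ($n=1$) version of the fractional comparison inequality of Lemma \ref{Lemma5} (equivalently Lemma \ref{Lemma1}). Differentiating $V$ along the solutions of \eqref{1.13} and applying \eqref{1.12} gives
$$D^{\beta}V(\varphi(t))=\tfrac{1}{2}D^{\beta}\varphi^{2}(t)\leq \varphi(t)\,D^{\beta}\varphi(t)=\varphi(t)f(\varphi(t)),$$
so that the sign hypothesis imposed on $\varphi f(\varphi)$ transfers immediately to a sign condition on $D^{\beta}V$ along trajectories.

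I would then split into the two cases. In the strict case $\varphi f(\varphi)<0$ for all $\varphi\neq 0$, the estimate above yields $D^{\beta}V(\varphi(t))<0$ whenever $\varphi(t)\neq 0$; since $V$ is positive definite, the fractional Lyapunov theorem (Lemma \ref{Lemma4}, specialized to $n=1$, $U=\varphi$, $\overline{\delta}=\beta$) gives global asymptotic stability of $\varphi^{\ast}=0$. In the non-strict case $\varphi f(\varphi)\leq 0$ one only obtains $D^{\beta}V(\varphi(t))\leq 0$, which gives Lyapunov stability of $\varphi^{\ast}$: a nonnegative, continuous function with nonpositive Caputo derivative of order $\beta\in(0,1)$ stays bounded by its initial value, so $|\varphi(t)|\leq|\varphi(t_{0})|$ and the origin is stable. (I would also remark that the two conclusions in the statement are naturally read as ``stable'' under \eqref{1.14} and ``asymptotically stable'' under \eqref{1.15}.)

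The main obstacle is that, unlike the integer-order case, the passage from $D^{\beta}V<0$ to asymptotic stability is not a one-line consequence of classical Lyapunov theory: there is no ordinary chain rule for the Caputo derivative, and controlling the infinite-memory term requires Mittag--Leffler type decay estimates. I would therefore not reprove this here but rely on the already-quoted fractional Lyapunov machinery (Lemma \ref{Lemma4} and the framework of \cite{AguilaCamacho2014} underlying Lemmas \ref{Lemma1} and \ref{Lemma5}); the only genuinely new content is the algebraic reduction displayed above and the bookkeeping of which hypothesis yields which mode of stability. A secondary point worth checking is that $f$ is regular enough (continuous, with $f(0)=0$ forced by the equilibrium condition together with the sign condition) for a solution of \eqref{1.13} to exist and for $t\mapsto V(\varphi(t))$ to be differentiable, so that Lemma \ref{Lemma5} is applicable in the first place.
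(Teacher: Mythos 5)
The paper does not actually prove Lemma \ref{Lemma6}: it is stated in Section 2 as quoted background alongside Lemmas \ref{Lemma4} and \ref{Lemma5}, with no argument and no citation attached, so there is no in--paper proof to compare yours against. Taken on its own terms, your derivation is the natural one and is consistent with the machinery the paper does assemble: the inequality $D^{\beta}V(\varphi(t))=\tfrac12 D^{\beta}\varphi^{2}(t)\leq\varphi(t)D^{\beta}\varphi(t)=\varphi(t)f(\varphi(t))$ is exactly the scalar case of Lemma \ref{Lemma1}/\ref{Lemma5}, and feeding the resulting sign condition into Lemma \ref{Lemma4} is precisely how the paper itself argues in Part III of the proof of Theorem \ref{Theo2}. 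Two remarks. First, you are right to be suspicious of the statement: under the non--strict hypothesis (\ref{1.14}) one only gets $D^{\beta}V\leq 0$, hence Lyapunov stability, not asymptotic stability; the lemma as printed asserts ``asymptotically stable'' in both cases, which is almost certainly a transcription slip (the intended reading is ``stable'' under (\ref{1.14}) and ``asymptotically stable'' under (\ref{1.15})), and your proof correctly delivers only what can actually be delivered. Second, a small point of rigor you partially flag: Lemma \ref{Lemma4} requires $D^{\overline{\delta}}V<0$ for \emph{all} $t\geq t_{0}$, whereas the strict hypothesis only gives this while $\varphi(t)\neq 0$; closing that gap needs a uniqueness/invariance argument (or the Mittag--Leffler estimates of the cited fractional Lyapunov literature), which you reasonably defer rather than reprove. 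Within the level of rigor the paper itself adopts, your proposal is correct and complete.
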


\section{Standard and Fractional Newton--Leipnik Models\label{SecModel}}

The Newton--Leipnik system first proposed in \cite{Leipnik1981} is of the
form%
\begin{equation}
\left \{ 
\begin{array}{l}
\frac{du_{1}}{dt}=-au_{1}+u_{2}+10u_{2}u_{3}:=f_{1}\left(
u_{1},u_{2},u_{3}\right) , \\ 
\frac{du_{2}}{dt}=-u_{1}-0.4u_{2}+5u_{1}u_{3}:=f_{2}\left(
u_{1},u_{2},u_{3}\right) , \\ 
\frac{du_{3}}{dt}=\alpha u_{3}-5u_{1}u_{2}:=f_{3}\left(
u_{1},u_{2},u_{3}\right) ,%
\end{array}%
\right.  \label{2.1}
\end{equation}%
$u_{i},i=1,2,3,$ denote the system's the states and $a$ and $\alpha $ are
bifurcation parameters. It is easy to show that system (\ref{2.1}) is
dissipative. By taking the divergence of the vector field $f$ on $%
%TCIMACRO{\U{211d} }%
%BeginExpansion
\mathbb{R}
%EndExpansion
^{3}$, we to obtain (see \cite{Qiang2008})%
\begin{equation}
\text{div}f=\alpha -a-0.4.  \label{2.2}
\end{equation}%
Let $\Omega $ be some region within $%
%TCIMACRO{\U{211d} }%
%BeginExpansion
\mathbb{R}
%EndExpansion
^{3}$ with a smooth boundary $\partial \Omega $ and let $\Omega \left(
t\right) =\Phi _{t}\left( \Omega \right) $, where $\Phi _{t}$ is the flow of
the vector field $f$, and $V\left( t\right) $ denote the volume of $\Omega
\left( t\right) $. Using Liouville's theorem, we obtain%
\begin{eqnarray}
\frac{dV\left( t\right) }{dt} &=&\int_{\Omega \left( t\right) }\left( \text{%
div}f\right) du_{1}du_{2}du_{3}  \notag \\
&=&\left( \alpha -a-0.4\right) V\left( t\right) .  \label{2.3}
\end{eqnarray}%
Solving the differential equation yields%
\begin{equation}
V\left( t\right) =V\left( 0\right) e^{\left( \alpha -a-0.4\right) t}.
\label{2.4}
\end{equation}%
It is easy to see that assuming $\alpha -a-0.4<0$, the volume decays to zero
asymptotically as $t\rightarrow \infty $, which means that the system is a
dissipative one. It is well known that a dissipative chaotic system has a
strange attractor.

In order to determine the equilibria of system (\ref{2.1}), we set the time
derivatives to zero and solve for $\left( u_{1},u_{2},u_{3}\right) $. This
yields the five equilibria (see \cite{Qiang2008})%
\begin{eqnarray}
O_{1} &=&\left( 
\begin{array}{c}
0 \\ 
0 \\ 
0%
\end{array}%
\right) ,O_{2}=\left( 
\begin{array}{c}
-0.031549 \\ 
0.12238 \\ 
-0.11031%
\end{array}%
\right) ,O_{3}=\left( 
\begin{array}{c}
0.031549 \\ 
-0.12238 \\ 
-0.11031%
\end{array}%
\right) ,  \notag \\
O_{4} &=&\left( 
\begin{array}{c}
0.23897 \\ 
0.030803 \\ 
0.21031%
\end{array}%
\right) ,\text{ \ and \ }O_{5}=\left( 
\begin{array}{c}
-0.23897 \\ 
-0.030803 \\ 
0.21031%
\end{array}%
\right) .  \label{2.5}
\end{eqnarray}

The asymptotic stability of these equilibria can be examined by means of
standard stability results. The system has been shown to exhibit a chaotic
behavior for specific values of the parameters $\left( a,\alpha \right) $.
For instance, subject to parameters $\left( a,\alpha \right) =\left(
0.4,0.175\right) $ and initial conditions $\left( u_{1}\left( 0\right)
,u_{2}\left( 0\right) ,u_{3}\left( 0\right) \right) =\left(
0.349,0,-0.3\right) $, the phase portraits of the system are depicted in
Figure \ref{Figure1}. The system exhibits a strange attractor with two
equilibria.

\begin{figure}[tbph]
\centering \includegraphics[width=\textwidth]{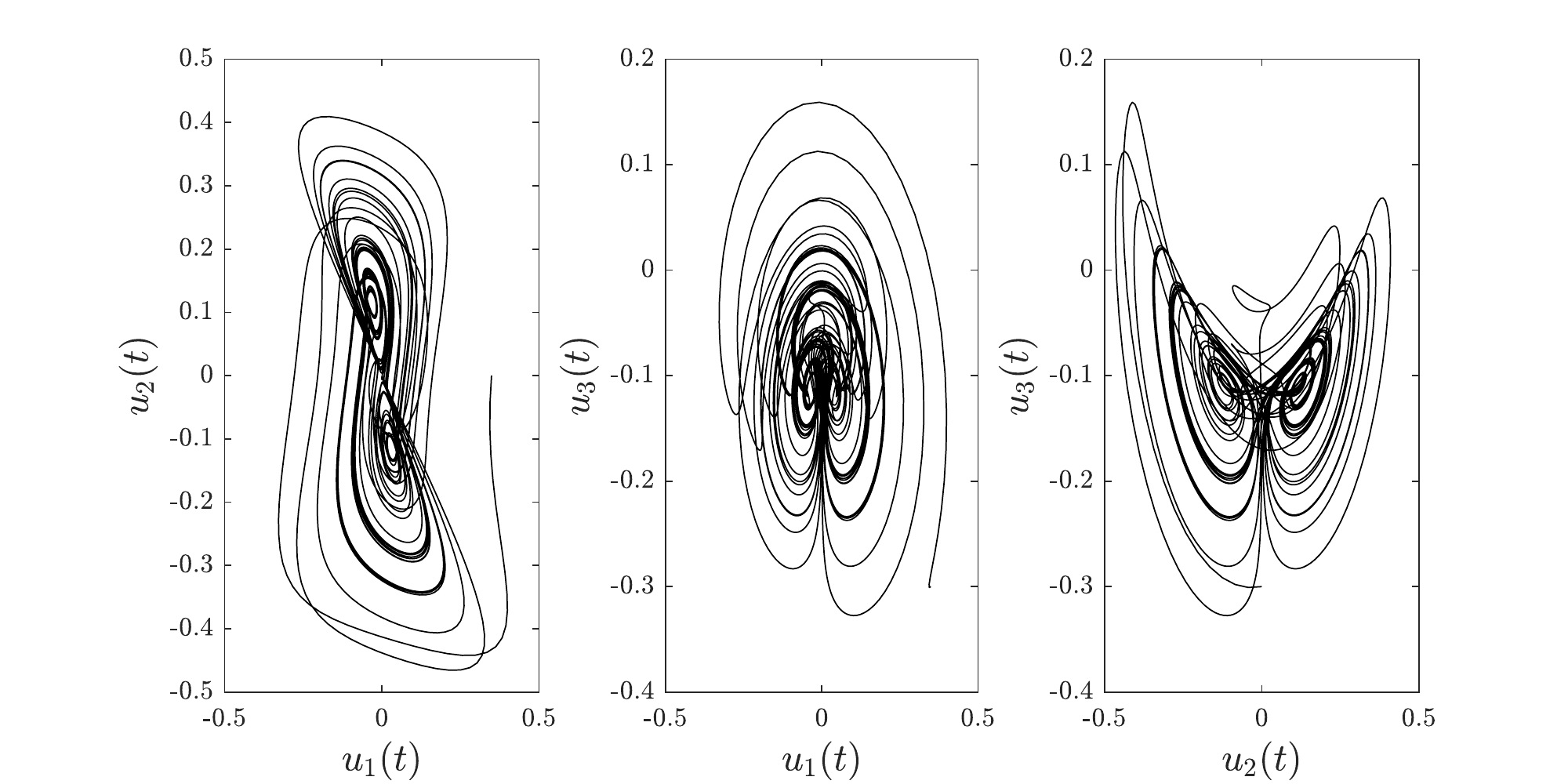}
\caption{Phase--space portraits of the standard Newton--Leipnik chaotic
system for $\left( a,\protect \alpha \right) =\left( 0.4,0.175\right) $ and
initial conditions $\left( u_{1}\left( 0\right) ,u_{2}\left( 0\right)
,u_{3}\left( 0\right) \right) =\left( 0.349,0,-0.3\right) $.}
\label{Figure1}
\end{figure}

The Caputo--type fractional version of the Newton--Leipnik system was
formulated in \cite{Kang2007} as%
\begin{equation}
\left \{ 
\begin{array}{l}
_{t_{0}}^{C}D_{t}^{\delta _{1}}u_{1}=-au_{1}+u_{2}+10u_{2}u_{3}, \\ 
_{t_{0}}^{C}D_{t}^{\delta _{2}}u_{2}=-u_{1}-0.4u_{2}+5u_{1}u_{3}, \\ 
_{t_{0}}^{C}D_{t}^{\delta _{3}}u_{3}=\alpha u_{3}-5u_{1}u_{2},%
\end{array}%
\right.  \label{2.6}
\end{equation}%
where $0<\delta _{1},\delta _{2},\delta _{3}\leq 1$ are the fractional
differentiation orders and $_{t_{0}}^{C}D_{t}^{\delta }$ denotes the Caputo
fractional derivative over $\left( t_{0},\infty \right) $ as defined in (\ref%
{1.2}). Assuming identical orders $\delta _{1}=\delta _{2}=\delta _{3}$, it
has been shown in \cite{Bendoukha2018b} that equilibrium $O_{1}$ is always
unstable whereas $O_{2}$ and $O_{3}$ are asymptotically stable subject to%
\begin{equation}
\delta <0.93660,  \label{2.7}
\end{equation}%
and $O_{4}$ and $O_{5}$ are asymptotically stable subject to%
\begin{equation}
\delta <0.9541.  \label{2.8}
\end{equation}%
These is confirmed by the numerical results in Figure \ref{Figure2}, where
the same parameters and initial conditions assumed previously were adopted
and the fractional order is varied.

As for the incommensurate case where the three orders are non--identical, no
exact bound has been found for the asymptotic stability of the system. It
was, however, shown in \cite{Bendoukha2018b} that for lower orders, the
system is asymptotically stable and for orders close to one, the system
becomes chaotic. For instance, it was shown that for $\left( \delta
_{1},\delta _{2},\delta _{3}\right) =\left( 1,0.95,0.975\right) $, all
euilibria are asymptotically unstable, whereas for $\left( \delta
_{1},\delta _{2},\delta _{3}\right) =\left( 0.85,0.9,0.8\right) $, all of $%
O_{2}$, $O_{3}$, $O_{4}$ and $O_{5}$ are asymptotically stable. This, again,
may be verified through numerical simulations as depicted in Figure \ref%
{Figure3}.

\begin{figure}[tbph]
\centering \includegraphics[width=4.5in]{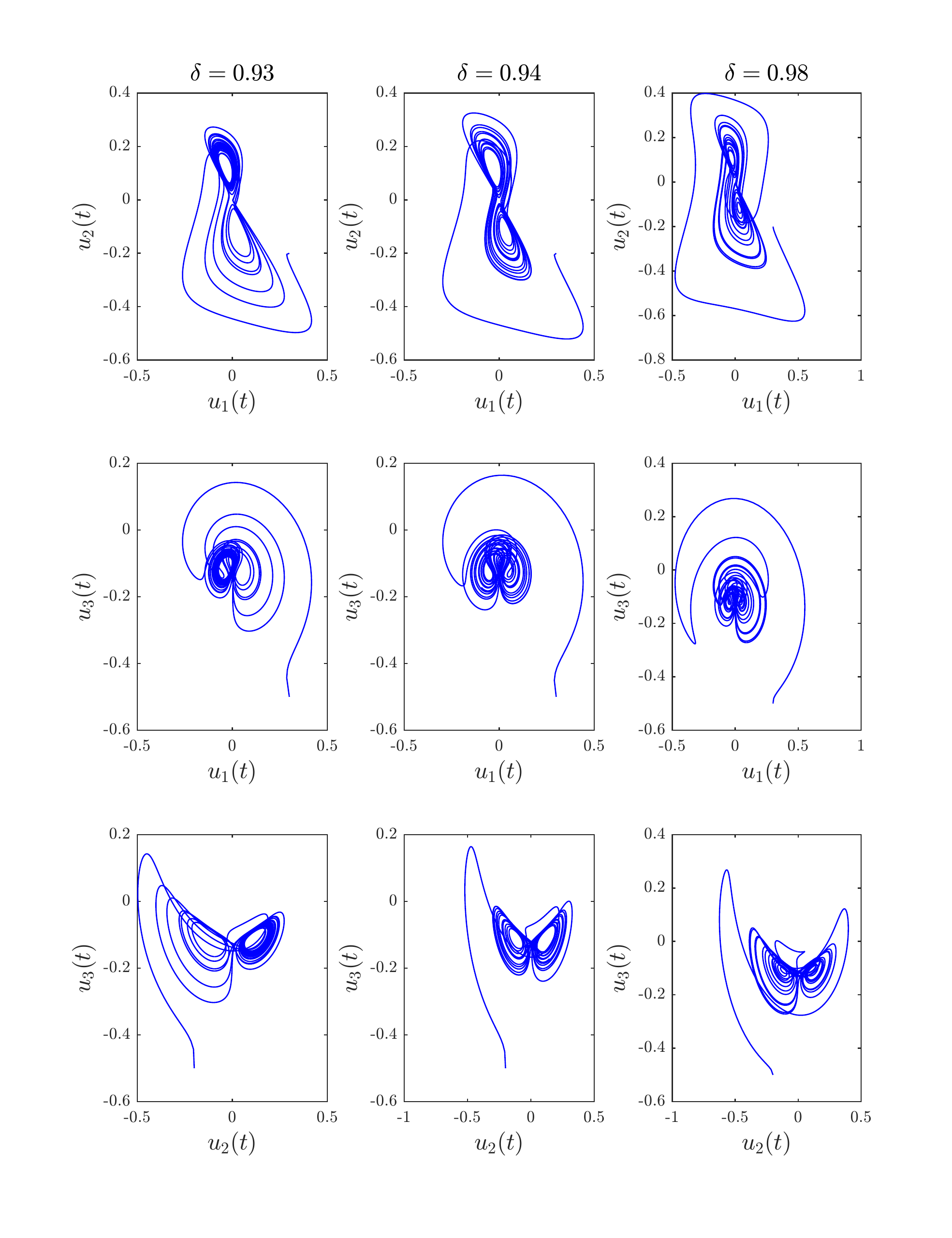}
\caption{Phase--space portraits of the fractional commensurate
Newton--Leipnik chaotic system for $\left( a,\protect \alpha \right) =\left(
0.4,0.175\right) $ and initial conditions $\left( u_{1}\left( 0\right)
,u_{2}\left( 0\right) ,u_{3}\left( 0\right) \right) =\left(
0.349,0,-0.3\right) $ with different fractional orders.}
\label{Figure2}
\end{figure}

\begin{figure}[tbph]
\centering \includegraphics[width=4in]{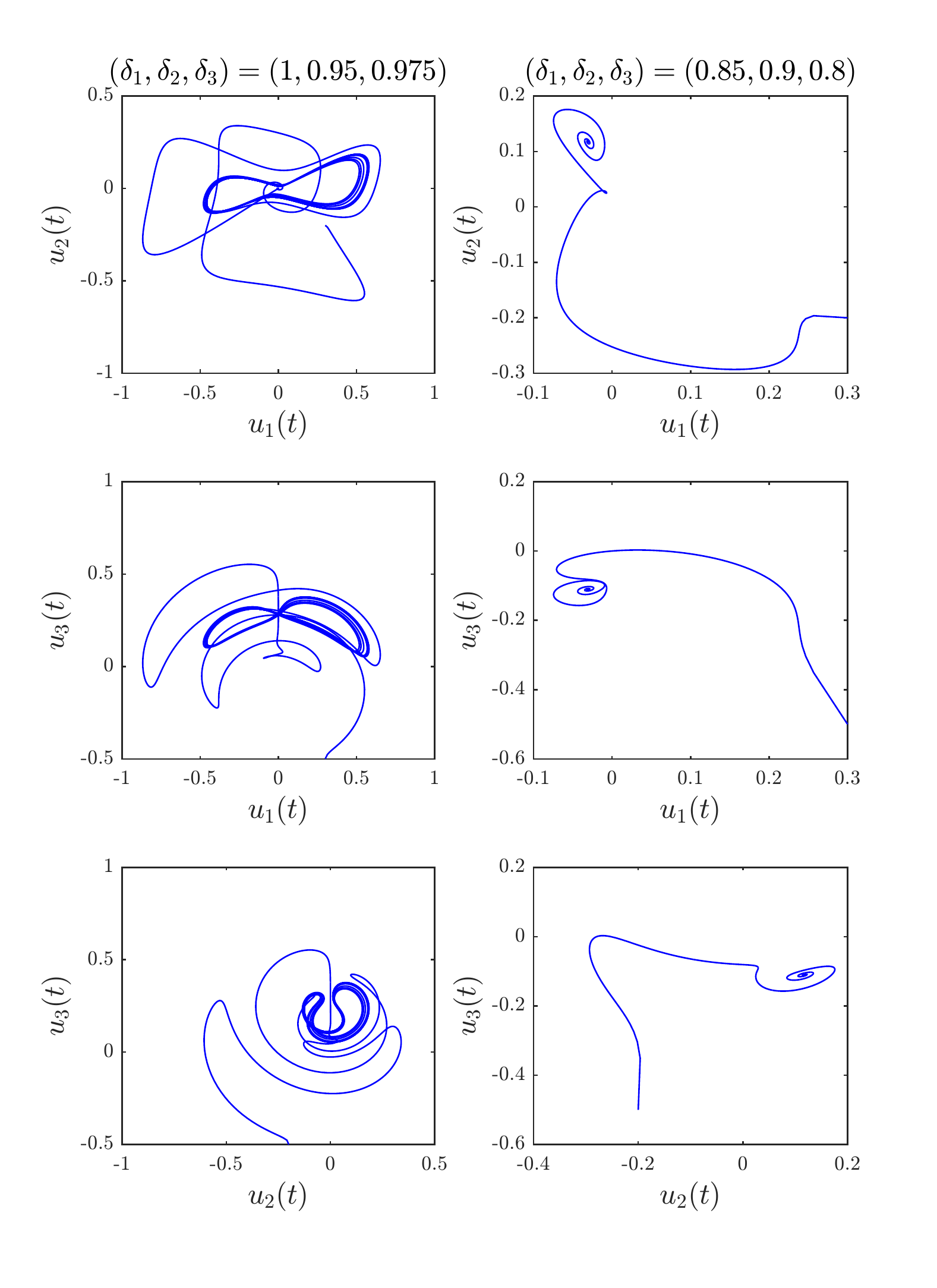}
\caption{Phase--space portraits of the fractional incommensurate
Newton--Leipnik chaotic system for $\left( a,\protect \alpha \right) =\left(
0.4,0.175\right) $ and initial conditions $\left( u_{1}\left( 0\right)
,u_{2}\left( 0\right) ,u_{3}\left( 0\right) \right) =\left(
0.349,0,-0.3\right) $ with different fractional orders.}
\label{Figure3}
\end{figure}

\section{Time--Fractional Reaction--Diffusion Model}

In this paper, we are concerned with the time--fractional
reaction--diffusion Newton--Leipnik system given by%
\begin{equation}
\left \{ 
\begin{array}{l}
_{t_{0}}^{C}D_{t}^{\delta _{1}}u_{1}-d_{1}\Delta
u_{1}=-au_{1}+u_{2}+10u_{2}u_{3},\  \  \  \  \  \text{in }\mathbb{R}^{+}\times
\Omega , \\ 
_{t_{0}}^{C}D_{t}^{\delta _{2}}u_{2}-d_{2}\Delta
u_{2}=-u_{1}-0.4u_{2}+5u_{1}u_{3},\  \  \  \text{in }\mathbb{R}^{+}\times
\Omega , \\ 
_{t_{0}}^{C}D_{t}^{\delta _{3}}u_{3}-d_{3}\Delta u_{3}=\alpha
u_{3}-5u_{1}u_{2},\  \  \  \  \  \  \  \  \  \  \  \  \  \  \  \text{in }\mathbb{R}%
^{+}\times \Omega ,%
\end{array}%
\right.  \label{3.1}
\end{equation}%
where $u_{i}\left( x,t\right) ,i=1,2,3,$\ are the spatio--temporal states of
the system, $\Omega $ is a bounded domain in $%
%TCIMACRO{\U{211d} }%
%BeginExpansion
\mathbb{R}
%EndExpansion
^{n}$ with smooth boundary $\partial \Omega $, $\Delta =\underset{i=1}{%
\overset{n}{\sum }}\frac{\partial ^{2}}{\partial x_{i}^{2}}$ is the
Laplacian operator on $\Omega $, and $d_{i}>0,i=1,2,3,$\ are the diffusivity
constants for each of the states. We assume the nonnegative initial
conditions%
\begin{equation}
0\leq u_{i}\left( 0,x\right) =u_{i,0}\left( x\right) ,\text{ }i=1,2,3,\text{%
\  \  \  \ in }\Omega ,  \label{3.2}
\end{equation}%
with $u_{i,0}\in C^{2}\left( \Omega \right) \cap C\left( \overline{\Omega }%
\right) $, and impose homogoneous Neumann boundary conditions%
\begin{equation}
\dfrac{\partial u_{i}}{\partial \nu }=0,\text{ }i=1,2,3,\  \  \text{\  \  \  \  \
on \  \  \ }\mathbb{R}^{+}\times \partial \Omega ,  \label{3.3}
\end{equation}%
where $\nu $ is the unit outer normal to $\partial \Omega $.

Subject to certain parameter values and fractional orders, system (\ref{3.1}%
) can be shown to exhibit spatio--temporal chaos. For instance, choosing
parameters $\left( a,\alpha \right) =\left( 0.4,0.175\right) $ and initial
conditions%
\begin{equation}
\left \{ 
\begin{array}{l}
u_{1}\left( x,0\right) =0.349\left[ 1+0.3\cos \left( \frac{x}{2}\right) %
\right] , \\ 
u_{2}\left( x,0\right) =0, \\ 
u_{3}\left( x,0\right) =-0.3\left[ 1+0.3\cos \left( \frac{x}{2}\right) %
\right] ,%
\end{array}%
\right.   \label{3.3.0}
\end{equation}%
yields spatio--temporal chaos as shown in Figure \ref{Figure4}(left) for $%
\delta =0.99$. Figure \ref{Figure4}(left) was produced over the ranges $t\in %
\left[ 0,50\right] $ and $x\in \left[ 0,20\right] $. Although the chaotic
nature of the states is apparent, it always helps to visualize the
phase--space portraits of the system. In order to be able to do that, we
choose the single spatial point $x=10$ and plot its phase space over time.
The result is depicted in Figure \ref{Figure4}(right). It is interesting to
realize that the fractional order has an impact on the dynamics of the
system. Keeping the same parameters and initial conditions and changing the
fractional order to $\delta =0.99$\ yields the results depicted in Figure %
\ref{Figure6}. The trajectories clearly converge to a closed orbit, which
implies an oscillatory behavior. Reducing the order further to $\delta =0.90$%
\ yields an asymptotically stable solution as shown in Figure \ref{Figure8}.

\begin{figure}[tbph]
\centering \includegraphics[width=2.5in]{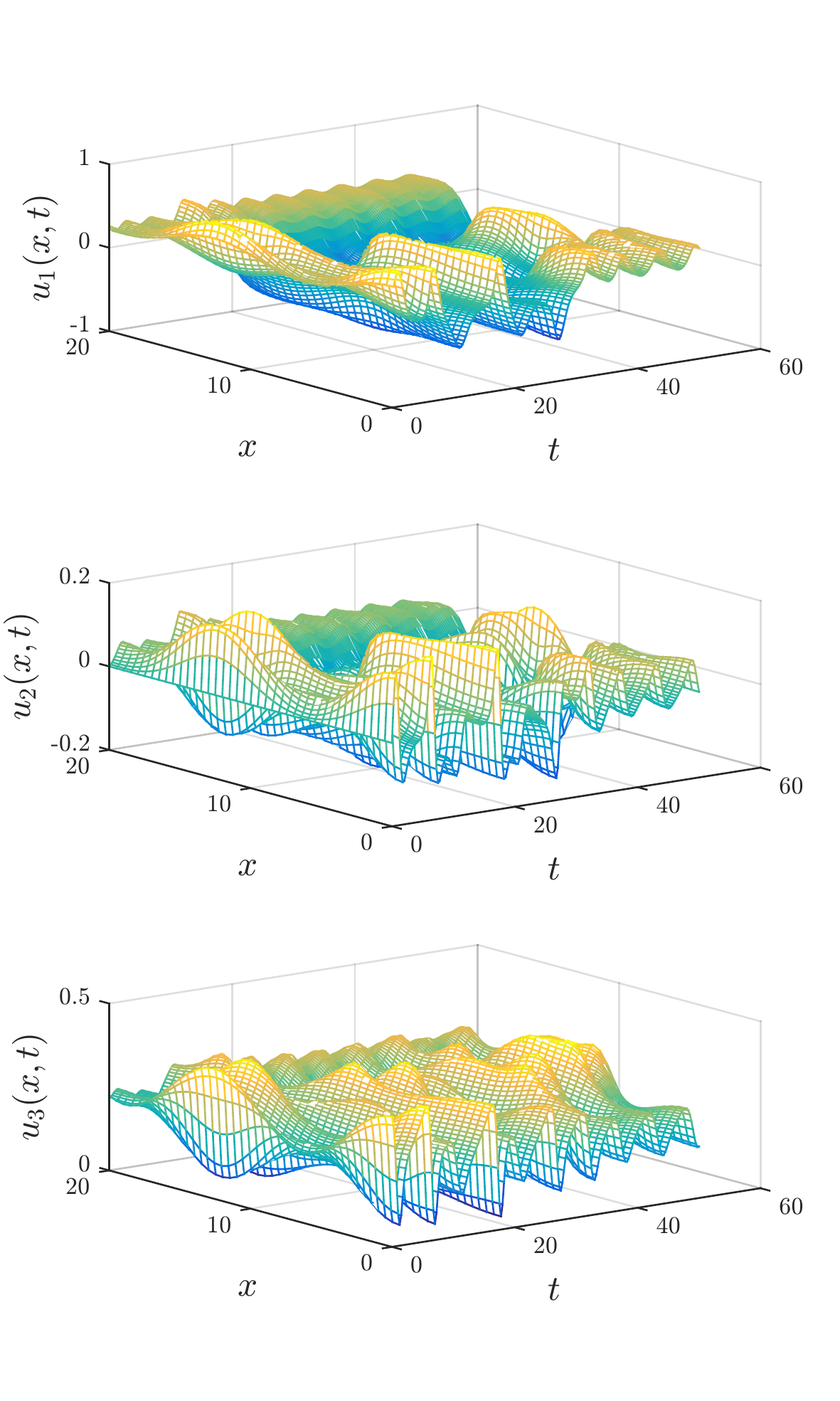} %
\includegraphics[width=2.5in]{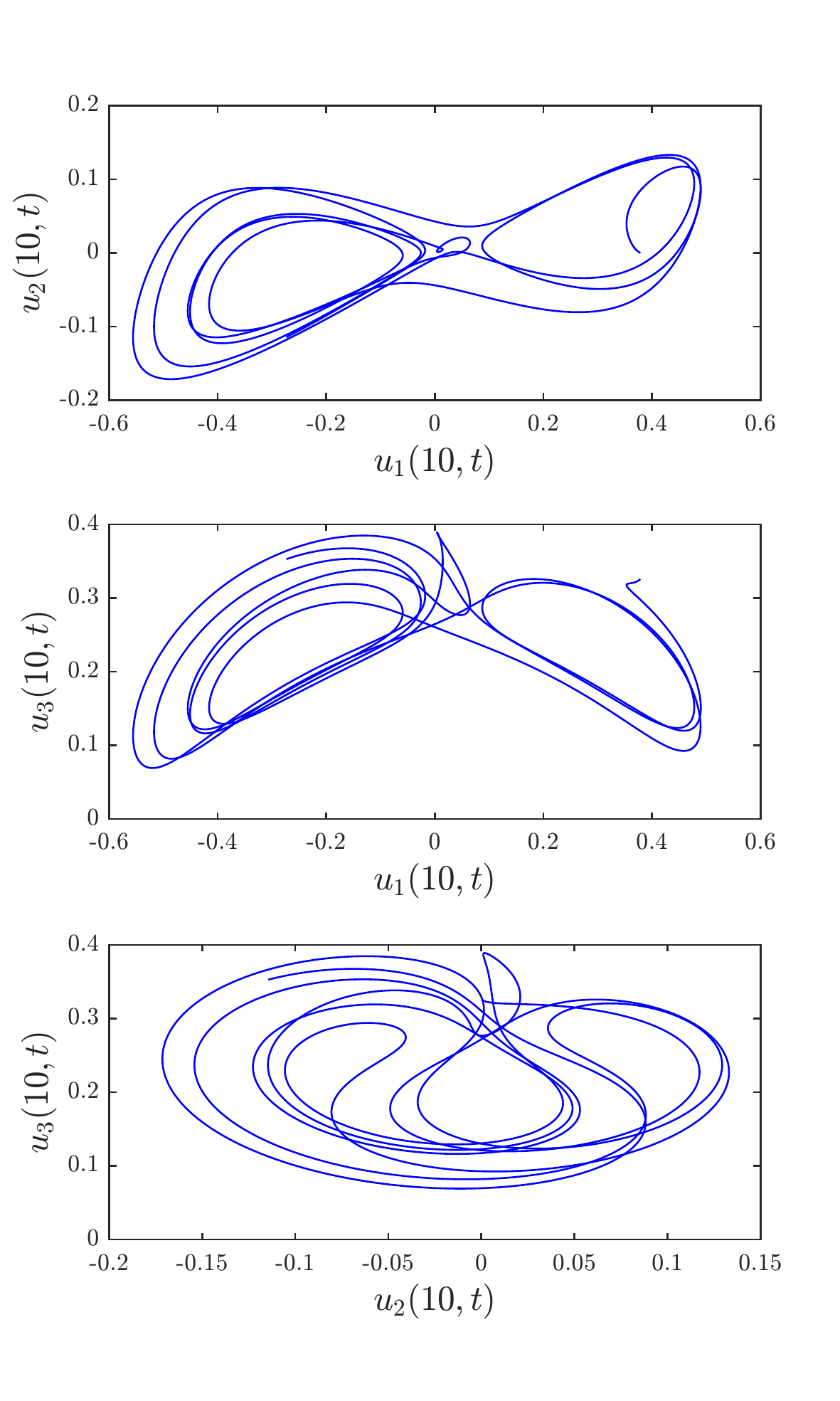}
\caption{Time evolution of the spatio--temporal states and the phase
portraits taken at $x=10$\ for parameters $\left( a,\protect \alpha \right)
=\left( 0.4,0.175\right) $, initial conditions (\protect \ref{3.3.0}), and
fractional order $\protect \delta =0.99$.}
\label{Figure4}
\end{figure}

\begin{figure}[tbph]
\centering%
\includegraphics[width=2.5in]{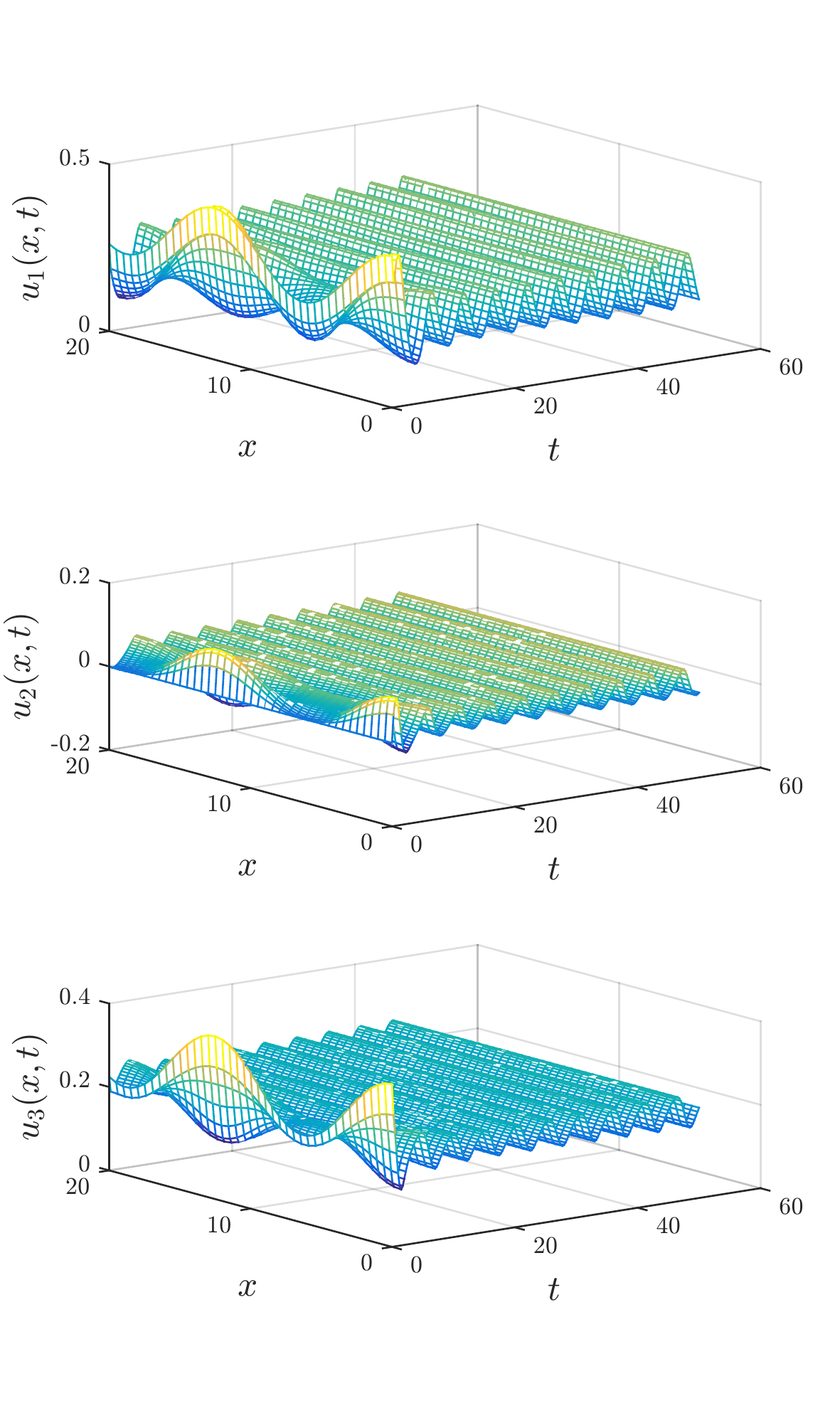}
\includegraphics[width=2.5in]{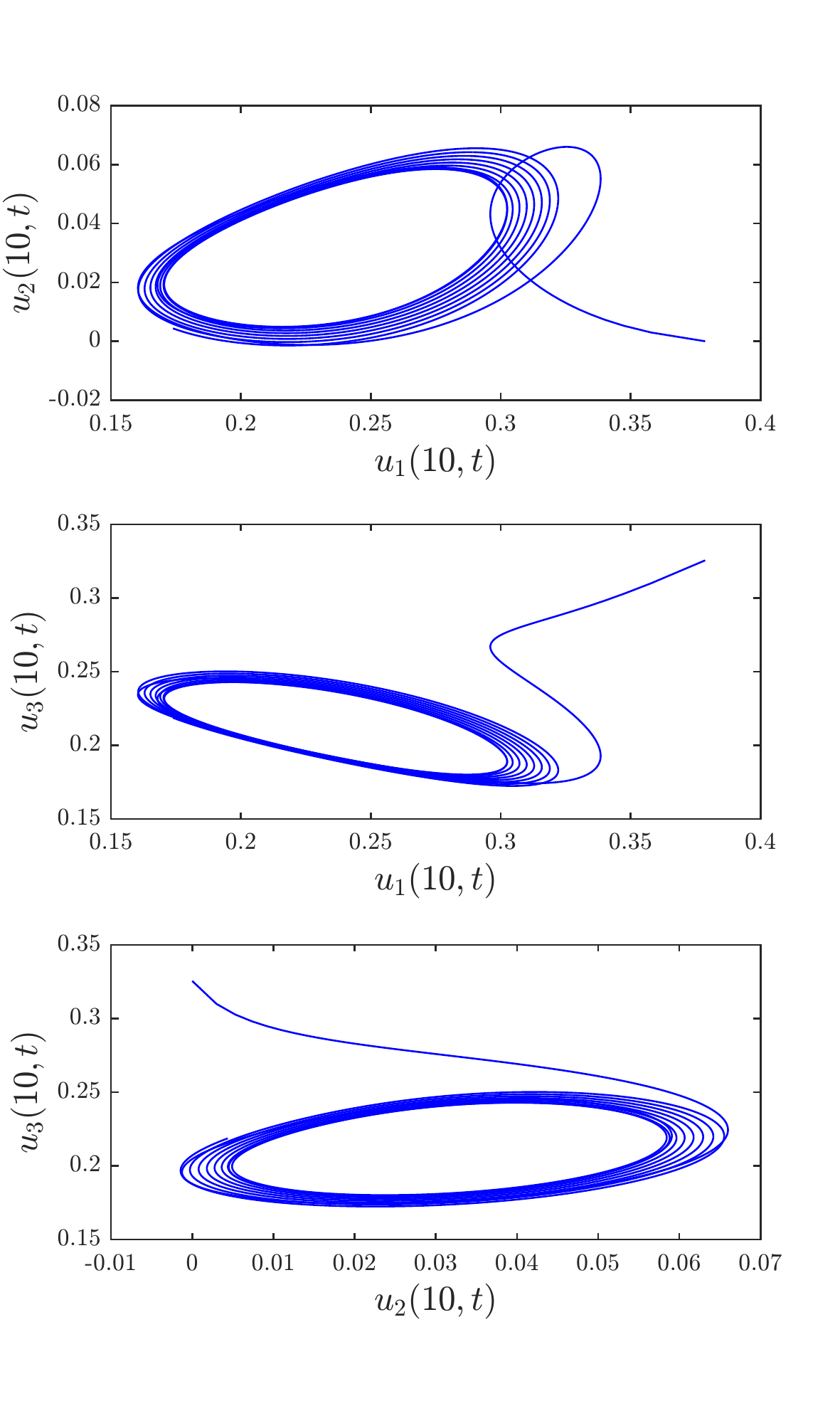}
\caption{Time evolution of the spatio--temporal states and the phase
portraits taken at $x=10$\ for parameters $\left( a,\protect \alpha \right)
=\left( 0.4,0.175\right) $, initial conditions (\protect \ref{3.3.0}), and
fractional order $\protect \delta =0.95$.}
\label{Figure6}
\end{figure}

\begin{figure}[tbph]
\centering%
\includegraphics[width=2.5in]{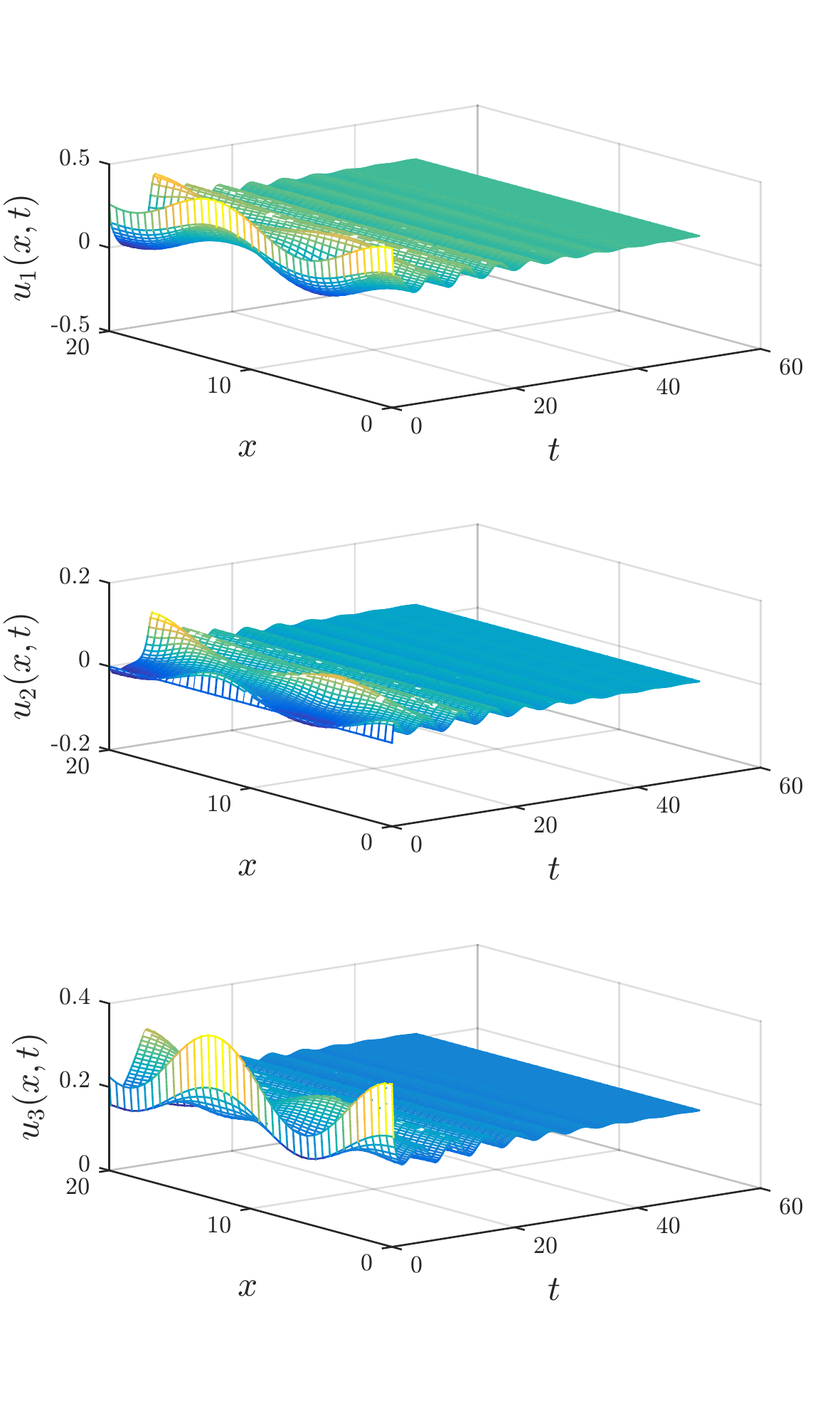}
\includegraphics[width=2.5in]{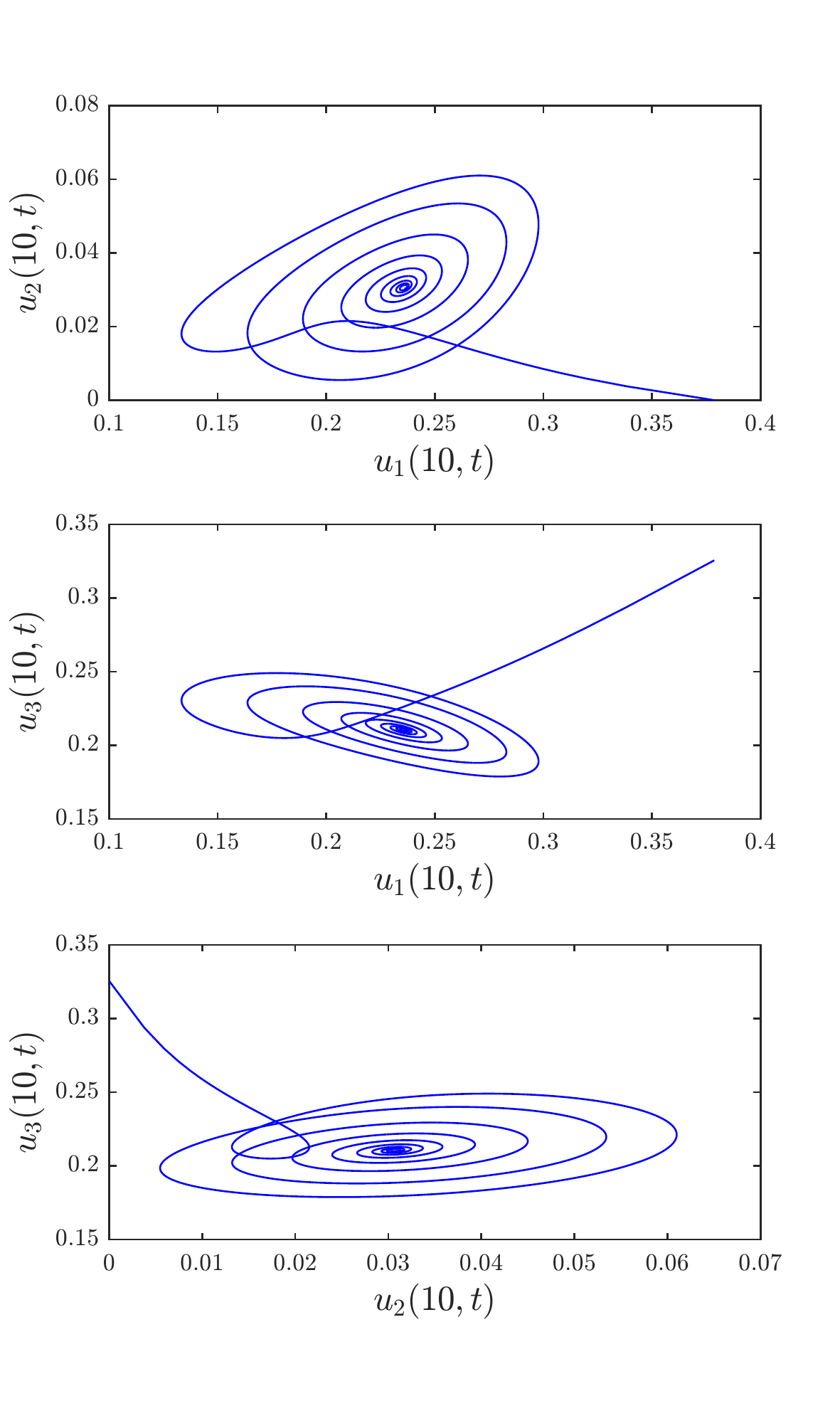}
\caption{Time evolution of the spatio--temporal states and the phase
portraits taken at $x=10$\ for parameters $\left( a,\protect \alpha \right)
=\left( 0.4,0.175\right) $, initial conditions (\protect \ref{3.3.0}), and
fractional order $\protect \delta =0.90$.}
\label{Figure8}
\end{figure}

For the incommensurate case, we consider the set of fractional constants $%
\left( \delta _{1},\delta _{2},\delta _{3}\right) =\left(
0.97,0.98,0.99\right) $. The spatio--temporal states and the phase space at $%
x=10$ are depicted in Figure \ref{Figure10}. The chaotic behavior with a
double strange attractor is apparent.

\begin{figure}[tbph]
\centering%
\includegraphics[width=2.5in]{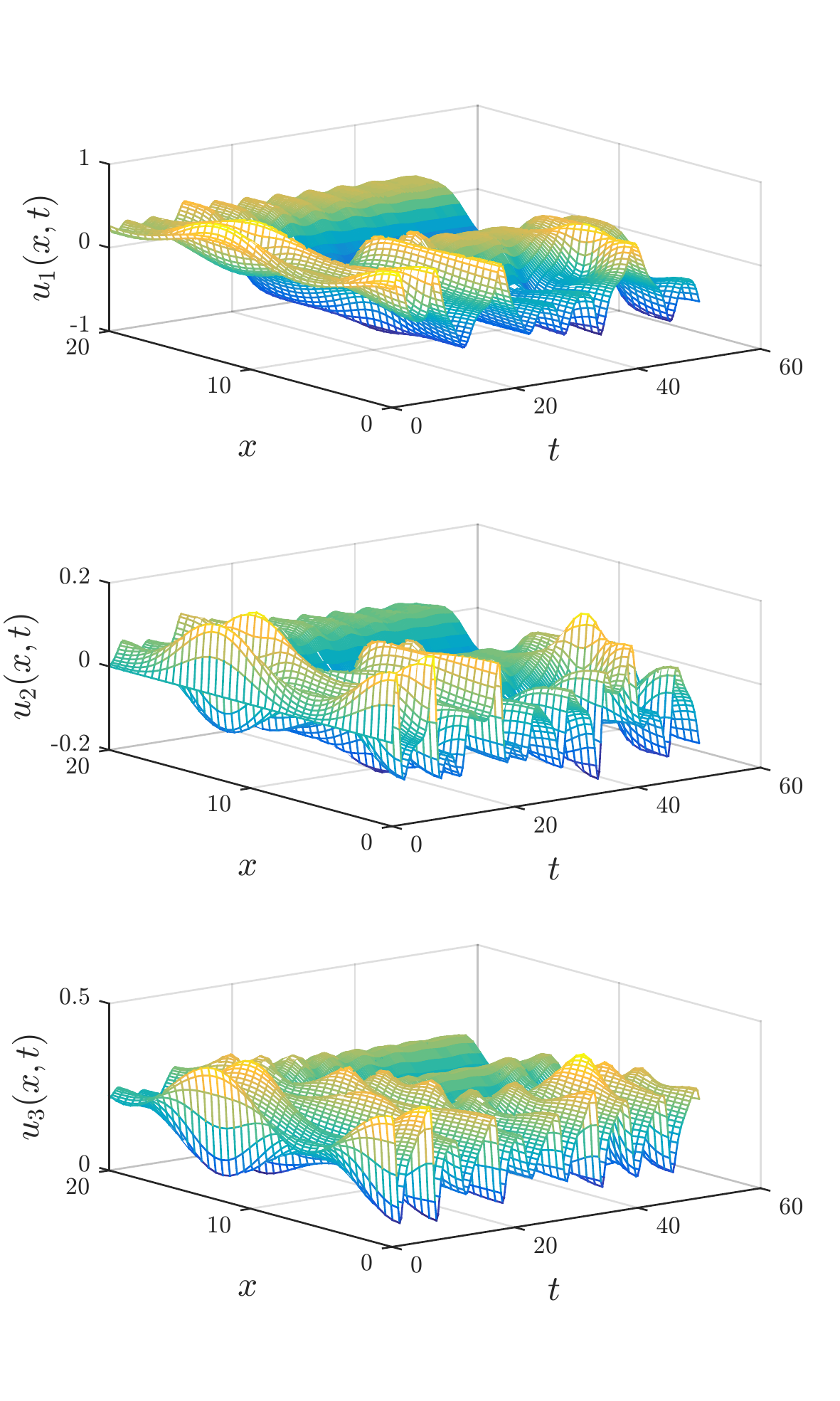}
\includegraphics[width=2.5in]{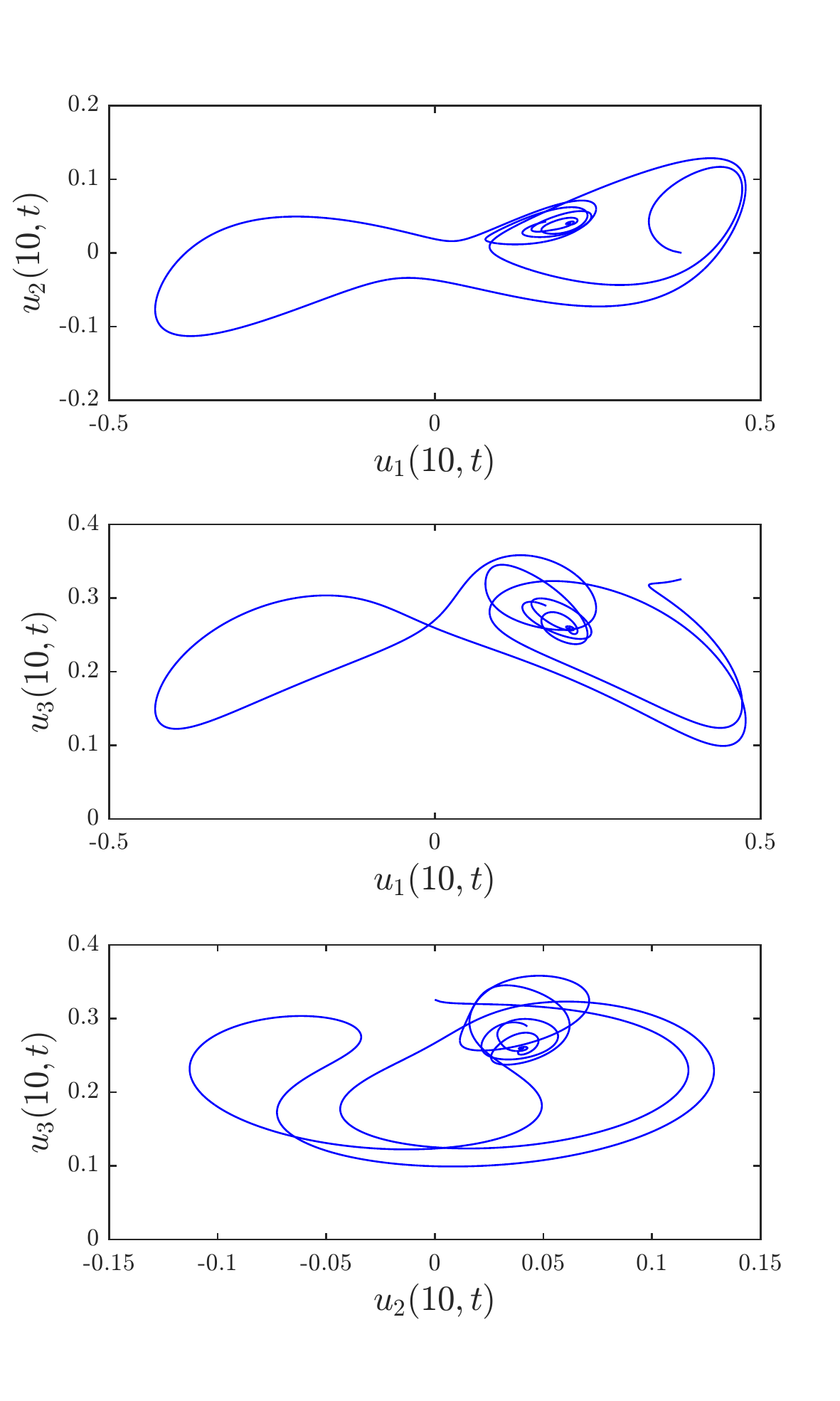}
\caption{Time evolution of the spatio--temporal states and the phase
portraits taken at $x=10$\ for parameters $\left( a,\protect \alpha \right)
=\left( 0.4,0.175\right) $, initial conditions (\protect \ref{3.3.0}), and
fractional order $\left( \protect \delta _{1},\protect \delta _{2},\protect%
\delta _{3}\right) =\left( 0.97,0.98,0.99\right) $.}
\label{Figure10}
\end{figure}

\section{Complete Synchronization\label{SecSync}}

The main objective of our paper is to develop an adaptive control scheme to
synchronize an identical slave system of the form%
\begin{equation}
\left \{ 
\begin{array}{l}
_{t_{0}}^{C}D_{t}^{\delta _{1}}v_{1}-d_{1}\Delta
v_{1}=-av_{1}+v_{2}+10v_{2}v_{3}+\phi _{1}, \\ 
_{t_{0}}^{C}D_{t}^{\delta _{2}}v_{2}-d_{2}\Delta
v_{2}=-v_{1}-0.4v_{2}+5v_{1}v_{3}+\phi _{2}, \\ 
_{t_{0}}^{C}D_{t}^{\delta _{3}}v_{3}-d_{3}\Delta v_{3}=\alpha
v_{3}-5v_{1}v_{2}+\phi _{3},%
\end{array}%
\right.  \label{3.4}
\end{equation}%
with $v_{i}\left( x,t\right) ,i=1,2,3,$ denoting the states of the slave
system and $\phi _{i}\left( x,t\right) ,i=1,2,3,$ being some control
parameters, to the master system given in (\ref{3.1}). Before we present the
synchronization scheme and assess the local and global asymptotic stability
of the zero solution to the error system, let us define the goal of
synchronization.

\begin{definition}
\label{Def5}System (\ref{3.1}) and the controlled system (\ref{3.4}) are
said to be asymptotically synchronized if 
\begin{equation}
\lim_{t\rightarrow \infty }\left \Vert u-v\right \Vert =0  \label{4.1}
\end{equation}%
for any $t>0$, where $u=(u_{1},u_{2},u_{3})^{T}\in \left( \mathbb{R}%
^{+}\times \Omega \right) ^{3}$ and $v=(v_{1},v_{2},v_{3})^{T}\in \left( 
\mathbb{R}^{+}\times \Omega \right) ^{3}$.
\end{definition}

We also need to define some necessary notation relating to the eigenvalues
and eigenfunctions of the Laplacian operator. We denote the eigenvalues of
the elliptic operator ($-\Delta $) subject to the homogeneous Neumann
boundary conditions on $\Omega $ by%
\begin{equation}
0=\lambda _{0}<\lambda _{1}\leq \lambda _{2}\leq ...  \label{4.2}
\end{equation}%
We assume that each eigenvalue $\lambda _{i}$ has multiplicity $m_{i}\geq 1$%
. We also denote the normalized eigenfunctions corresponding to $\lambda
_{i} $ by $\Phi _{ij},1\leq j\leq m_{i}$. It should be noted that $\Phi _{0}$
is a constant and $\lambda _{i}\rightarrow \infty $ as $i\rightarrow \infty $%
. The eigenfunctions and eigenvalues possess a number of interesting
properties including%
\begin{equation}
\begin{array}{lll}
-\Delta \Phi _{ij}=\lambda _{i}\Phi _{ij} & \text{in} & \Omega , \\ 
\frac{\partial \Phi _{ij}}{\partial \nu }=0 & \text{on} & \partial \Omega ,
\\ 
\int_{\Omega }\Phi _{ij}^{2}\left( x\right) dx=1. &  & 
\end{array}
\label{4.3}
\end{equation}

We are ready to present our main result as stated in the following theorem.
We assume that the fractional order is identical for all components of the
master and slave systems yielding a commensurate system. The local
asymptotic convergence of the synchronization error is established by means
of eigenfunction analysis and the global convergence is guaranteed by the
Lyapunov method. For the synchronization problem, we will assume $\delta
_{1}=\delta _{2}=\delta _{3}=:\delta $.

\begin{theorem}
\label{Theo2}For the general fractional orders $0<\delta \leq 1$, the
master--slave pair (\ref{3.1}--\ref{3.4}) is globally synchronized subject
to the nonlinear control laws%
\begin{equation}
\left \{ 
\begin{array}{l}
\phi _{1}=-10\left( e_{2}e_{3}+u_{2}e_{3}+e_{2}u_{3}\right) , \\ 
\phi _{2}=-5\left( e_{1}e_{2}+u_{1}e_{3}+e_{1}u_{3}\right) , \\ 
\phi _{3}=5\left( e_{1}e_{2}+u_{1}e_{2}+e_{1}u_{2}\right) -\left( \alpha
+0.4\right) e_{3},%
\end{array}%
\right.  \label{4.4}
\end{equation}%
if for all eigenvalues $\lambda _{i}$ satisfying%
\begin{equation}
\lambda _{i}<\frac{2}{\left \vert d_{1}-d_{2}\right \vert },\
d_{1}\not=d_{2},  \label{4.5}
\end{equation}%
condition%
\begin{equation}
\left \vert \arg \left( \xi _{1,2}\right) \right \vert >\frac{\delta \pi }{2}
\label{4.6}
\end{equation}%
is fulfilled, where%
\begin{equation}
\xi _{1,2}=\frac{1}{2}\left[ \left( -\left( d_{1}+d_{2}\right) \lambda
_{i}-0.8\right) \pm i\sqrt{4-\left( d_{1}-d_{2}\right) ^{2}\lambda _{i}^{2}}%
\right] .  \label{4.7}
\end{equation}
\end{theorem}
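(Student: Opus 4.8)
The plan is to first derive the error system, then establish local stability via eigenfunction expansion, and finally obtain global stability through a Lyapunov functional. Writing $e_i = u_i - v_i$ and subtracting (3.4) from (3.1), the controller (4.4) is engineered precisely so that the quadratic cross-terms ($u_2u_3 - v_2v_3$ expanded as $e_2e_3 + u_2e_3 + e_2u_3$, and similarly for the other two equations) cancel against the $\phi_i$, and the extra linear term $-(\alpha+0.4)e_3$ in $\phi_3$ converts the $+\alpha e_3$ reaction term into $-0.4 e_3$. So I would show the error dynamics reduce to the linear reaction--diffusion system
\begin{equation*}
\left\{
\begin{array}{l}
{}_{t_0}^C D_t^\delta e_1 - d_1\Delta e_1 = -a e_1 + e_2,\\
{}_{t_0}^C D_t^\delta e_2 - d_2\Delta e_2 = -e_1 - 0.4 e_2,\\
{}_{t_0}^C D_t^\delta e_3 - d_3\Delta e_3 = -0.4 e_3.
\end{array}
\right.
\end{equation*}
Note $e_3$ decouples, so its stability is immediate; the coupled part is the $(e_1,e_2)$ block.

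For local stability I would expand each $e_i(x,t) = \sum_{i,j} a_{ij}(t)\Phi_{ij}(x)$ in the Neumann eigenbasis of $-\Delta$, using the properties (4.3). Projecting the linear error system onto the eigenspace of $\lambda_i$ replaces $-\Delta$ by multiplication by $\lambda_i$, giving, for the $(e_1,e_2)$ coefficients, a fractional linear ODE system with coefficient matrix $M_i = \begin{pmatrix} -a - d_1\lambda_i & 1 \\ -1 & -0.4 - d_2\lambda_i \end{pmatrix}$. Its characteristic polynomial has roots $\xi_{1,2}$; computing the trace $-(a+0.4) - (d_1+d_2)\lambda_i$ and determinant, and specializing to the system's parameter $a=0.4$ so the trace becomes $-(d_1+d_2)\lambda_i - 0.8$, the discriminant is $(d_1-d_2)^2\lambda_i^2 - 4$, yielding exactly formula (4.7) (the square-root term being imaginary precisely when $\lambda_i < 2/|d_1-d_2|$, which is condition (4.5)). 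Then Lemma~\ref{Lemma2} applied mode-by-mode gives local asymptotic stability of the zero error whenever $|\arg(\xi_{1,2})| > \delta\pi/2$ for every such $\lambda_i$; for the modes with $\lambda_i \geq 2/|d_1-d_2|$ the eigenvalues are real and negative (so their argument is $\pi > \delta\pi/2$ automatically), and the $e_3$ mode contributes eigenvalue $-0.4 - d_3\lambda_i < 0$.

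For global stability I would use the Lyapunov functional $V = \tfrac12 \int_\Omega (e_1^2 + e_2^2 + e_3^2)\,dx$. Applying Lemma~\ref{Lemma5} (the integrated analogue of Lemma~\ref{Lemma1}) gives $D_t^{\overline{\delta}} V \leq \int_\Omega (e_1\, {}^C D_t^\delta e_1 + e_2\, {}^C D_t^\delta e_2 + e_3\, {}^C D_t^\delta e_3)\,dx$. Substituting the error dynamics, the diffusion terms $d_i\int_\Omega e_i\Delta e_i\,dx = -d_i\int_\Omega |\nabla e_i|^2\,dx \leq 0$ after integration by parts using the Neumann condition (3.3), and the reaction terms contribute $\int_\Omega(-a e_1^2 + e_1 e_2 - e_1 e_2 - 0.4 e_2^2 - 0.4 e_3^2)\,dx = \int_\Omega(-a e_1^2 - 0.4 e_2^2 - 0.4 e_3^2)\,dx$ — the $e_1 e_2$ cross-terms cancel exactly. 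Hence $D_t^{\overline{\delta}} V \leq -\int_\Omega (a e_1^2 + 0.4 e_2^2 + 0.4 e_3^2)\,dx < 0$ whenever $e \not\equiv 0$, and Lemma~\ref{Lemma4} then delivers global asymptotic stability of the zero error, i.e. synchronization in the sense of Definition~\ref{Def5}.

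The main obstacle is bookkeeping rather than conceptual: one must verify that the chosen controller cancels the nonlinearities exactly (a careful but routine expansion), and be slightly careful that the "global" conclusion via the Lyapunov route actually holds for all $0<\delta\le1$ and does not need the eigenvalue condition (4.5)--(4.6) at all — that condition is only what the (sharper, $\delta$-dependent) local analysis requires. I would present the Lyapunov computation as the decisive step and treat the eigenfunction analysis as giving the finer local picture. A minor technical point to handle is justifying the termwise differentiation / integration-by-parts under the fractional derivative, which I would cover by citing the regularity assumption $u_{i,0}\in C^2(\Omega)\cap C(\overline\Omega)$ and standard parabolic smoothing.
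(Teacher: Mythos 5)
Your proposal follows essentially the same route as the paper: cancellation of the nonlinearities by the controller to obtain the linear error system, a mode-by-mode eigenfunction analysis with Matignon's condition for the local/diffusive part (including the real-eigenvalue case via trace and determinant), and the Lyapunov functional $V=\tfrac12\int_\Omega(e_1^2+e_2^2+e_3^2)\,dx$ with Lemmas \ref{Lemma4}--\ref{Lemma5} for global convergence; your observation that the Lyapunov step does not actually use (\ref{4.5})--(\ref{4.6}) is consistent with the paper's own Part III. One bookkeeping caveat: the expansion $e_2e_3+u_2e_3+e_2u_3$ that the controller (\ref{4.4}) cancels is that of $v_2v_3-u_2u_3$ under the convention $e_i=v_i-u_i$, not $e_i=u_i-v_i$ as you wrote, so the sign convention must be fixed accordingly for the exact cancellation to go through.
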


\begin{proof}
\textbf{Part\ I:} In the first part of our proof, we show that the zero
solution of the error system is locally asymptotically stable in the
diffusion free case. The synchronization errors can be given by%
\begin{equation}
\left \{ 
\begin{array}{l}
D_{t}^{\delta }e_{1}-d_{1}\Delta e_{1}=-0.4e_{1}+e_{2}+10\left(
v_{2}v_{3}-u_{2}u_{3}\right) +\phi _{1}, \\ 
D_{t}^{\delta }e_{2}-d_{2}\Delta e_{2}=-e_{1}-0.4e_{2}+5\left(
v_{1}v_{3}-u_{1}u_{3}\right) +\phi _{2}, \\ 
D_{t}^{\delta }e_{3}-d_{3}\Delta e_{3}=\alpha e_{3}-5\left(
v_{1}v_{2}-u_{1}u_{2}\right) +\phi _{3}.%
\end{array}%
\right.  \label{4.8}
\end{equation}%
This can be rewritten in the more compact form%
\begin{equation}
\left \{ 
\begin{array}{l}
D_{t}^{\delta }e_{1}-d_{1}\Delta e_{1}=-0.4e_{1}+e_{2}+10\left(
e_{2}e_{3}+u_{2}e_{3}+e_{2}u_{3}\right) +\phi _{1}, \\ 
D_{t}^{\delta }e_{2}-d_{2}\Delta e_{2}=-e_{1}-0.4e_{2}+5\left(
e_{1}e_{2}+u_{1}e_{3}+e_{1}u_{3}\right) +\phi _{2}, \\ 
D_{t}^{\delta }e_{3}-d_{3}\Delta e_{3}=\alpha e_{3}-5\left(
e_{1}e_{2}+u_{1}e_{2}+e_{1}u_{2}\right) +\phi _{3}.%
\end{array}%
\right.  \label{4.9}
\end{equation}%
Subsituting the controls (\ref{4.4}) in (\ref{4.9}) yields the error dynamics%
\begin{equation}
\left \{ 
\begin{array}{l}
D_{t}^{\delta }e_{1}-d_{1}\Delta e_{1}=-0.4e_{1}+e_{2},\  \  \  \  \  \text{in }%
\mathbb{R}^{+}\times \Omega , \\ 
D_{t}^{\delta }e_{2}-d_{2}\Delta e_{2}=-e_{1}-0.4e_{2},\  \  \  \  \  \text{in }%
\mathbb{R}^{+}\times \Omega , \\ 
D_{t}^{\delta }e_{3}-d_{3}\Delta e_{3}=-0.4e_{3},\  \  \  \  \  \  \  \  \  \  \  \  \ 
\text{in }\mathbb{R}^{+}\times \Omega ,%
\end{array}%
\right.  \label{4.10}
\end{equation}%
with the Jacobian matrix%
\begin{equation}
J_{e}=\left( 
\begin{array}{ccc}
-0.4 & 1 & 0 \\ 
-1 & -0.4 & 0 \\ 
0 & 0 & -0.4%
\end{array}%
\right) .  \label{4.11}
\end{equation}%
The eigenvalues of $J_{e}$\ are simply $-0.4+1.0i$, $-0.4-1.0i$, and $-0.4$.
We see that%
\begin{equation*}
\left \vert \arg \left( -0.4\pm i\right) \right \vert =1.9513,
\end{equation*}%
and%
\begin{equation*}
\left \vert \arg \left( -0.4\right) \right \vert =\pi .
\end{equation*}

Selecting%
\begin{equation*}
\delta <1.2422
\end{equation*}%
guarantees asymptotic stability. Since is assumed to lie in the interval $%
0<\delta \leq 1$, local asymptotic stability of the zero solution to (\ref%
{4.9}) in the diffusion free case is evident.

\textbf{Part II:} In this second part, we want to include diffusion and
assess the local stability of the zero solution. In the presence of
diffusion, the steady state solution satisfies the following system%
\begin{equation*}
\left \{ 
\begin{array}{l}
-d_{1}\Delta e_{1}=-0.4e_{1}+e_{2}, \\ 
-d_{2}\Delta e_{2}=-e_{1}-0.4e_{2}, \\ 
-d_{3}\Delta e_{3}=-0.4e_{3},%
\end{array}%
\right.
\end{equation*}%
subject to the homogeneous Neumann boundary conditions 
\begin{equation*}
\dfrac{\partial e_{1}}{\partial \nu }=\dfrac{\partial e_{2}}{\partial \nu }=%
\dfrac{\partial e_{3}}{\partial \nu }=0\text{ for all\ }x\in \partial \Omega
.
\end{equation*}

Consider the linearization operator%
\begin{equation*}
L=\left( 
\begin{array}{ccc}
-d_{1}\Delta -0.4 & 1 & 0 \\ 
-1 & -d_{2}\Delta -0.4 & 0 \\ 
0 & 0 & -d_{3}\Delta -0.4%
\end{array}%
\right) .
\end{equation*}%
Let $\left( \phi \left( x\right) ,\psi \left( x\right) ,\Upsilon \left(
x\right) \right) $ be an eigenfunction of $L$ corresponding to the
eigenvalue $\xi $, i.e. the pair satisfies%
\begin{equation*}
L\left( \phi \left( x\right) ,\psi \left( x\right) ,\Upsilon \left( x\right)
\right) ^{t}=\xi \left( \phi \left( x\right) ,\psi \left( x\right) ,\Upsilon
\left( x\right) \right) ^{t}.
\end{equation*}%
Alternatively, we can write%
\begin{equation*}
\left[ L-\xi I\right] \left( \phi \left( x\right) ,\psi \left( x\right)
\right) ^{t}=\left( 0,0,0\right) ^{t},
\end{equation*}%
leading to%
\begin{equation*}
\left( 
\begin{array}{ccc}
-d_{1}\Delta -0.4-\xi & 1 & 0 \\ 
-1 & -d_{2}\Delta -0.4-\xi & 0 \\ 
0 & 0 & -d_{3}\Delta -0.4-\xi%
\end{array}%
\right) \left( 
\begin{array}{c}
\phi \\ 
\psi \\ 
\Upsilon%
\end{array}%
\right) =\left( 
\begin{array}{c}
0 \\ 
0 \\ 
0%
\end{array}%
\right) .
\end{equation*}%
Using the factorizations%
\begin{equation*}
\phi =\sum_{0\leq i\leq \infty ,1\leq j\leq m_{i}}a_{ij}\Phi _{ij}\text{ },\
\psi =\sum_{0\leq i\leq \infty ,1\leq j\leq m_{i}}b_{ij}\Phi _{ij},\  \text{%
and }\Upsilon =\sum_{0\leq i\leq \infty ,1\leq j\leq m_{i}}c_{ij}\Phi _{ij},
\end{equation*}%
the matrix equation can be formulated as%
\begin{equation*}
\sum_{0\leq i\leq \infty ,1\leq j\leq m_{i}}\left( 
\begin{array}{ccc}
-d_{1}\lambda _{i}-0.4-\xi & 1 & 0 \\ 
-1 & -d_{2}\lambda _{i}-0.4-\xi & 0 \\ 
0 & 0 & -d_{3}\lambda _{i}-0.4-\xi%
\end{array}%
\right) \left( 
\begin{array}{c}
a_{ij} \\ 
b_{ij} \\ 
c_{ij}%
\end{array}%
\right) \Phi _{ij}=\left( 
\begin{array}{c}
0 \\ 
0 \\ 
0%
\end{array}%
\right) .
\end{equation*}%
Disregarding the term $-\xi $, the stability of the steady state solution
relies on the eigenvalues of%
\begin{equation*}
A_{i}=\left( 
\begin{array}{ccc}
-d_{1}\lambda _{i}-0.4 & 1 & 0 \\ 
-1 & -d_{2}\lambda _{i}-0.4 & 0 \\ 
0 & 0 & -d_{3}\lambda _{i}-0.4%
\end{array}%
\right) ,
\end{equation*}%
whose characteristic polynomial is%
\begin{equation*}
\left( \left( d_{1}\lambda _{i}+0.4+\xi \right) \left( d_{2}\lambda
_{i}+0.4+\xi \right) +1\right) \left( d_{3}\lambda _{i}+0.4+\xi \right) =0.
\end{equation*}%
Clearly, one of the eigenvalues is%
\begin{equation*}
\xi _{3}=-d_{3}\lambda _{i}-0.4.
\end{equation*}%
The remaining eigenvalues are the solutions of%
\begin{equation*}
\left \vert 
\begin{array}{cc}
-d_{1}\lambda _{i}-0.4-\xi & 1 \\ 
-1 & -d_{2}\lambda _{i}-0.4-\xi%
\end{array}%
\right \vert =0,
\end{equation*}%
or more compactly%
\begin{equation*}
\xi ^{2}-\left( -\left( d_{1}+d_{2}\right) \lambda _{i}-0.8\right) \xi
+\left( \lambda _{i}^{2}d_{1}d_{2}+0.4\left( d_{1}+d_{2}\right) \lambda
_{i}+1.16\right) =0.
\end{equation*}%
The discriminant of this quandratic polynomial is%
\begin{eqnarray*}
\Delta &=&\left( -\left( d_{1}+d_{2}\right) \lambda _{i}-0.8\right)
^{2}-4\left( \lambda _{i}^{2}d_{1}d_{2}+0.4\left( d_{1}+d_{2}\right) \lambda
_{i}+1.16\right) \\
&=&\left( \left( d_{1}+d_{2}\right) ^{2}-4d_{1}d_{2}\right) \lambda
_{i}^{2}-4.0 \\
&=&\left( d_{1}-d_{2}\right) ^{2}\lambda _{i}^{2}-4.
\end{eqnarray*}%
Depending on the sign of $\Delta $, we may end up with different scenarios:

\begin{itemize}
\item First, if $\Delta =\left( d_{1}-d_{2}\right) ^{2}\lambda
_{i}^{2}-4\geq 0$, then the remaining two eigenvalues are both real. It
helps to consider the trace%
\begin{equation*}
\text{tr}\left( 
\begin{array}{cc}
-d_{1}\lambda _{i}-0.4 & 1 \\ 
-1 & -d_{2}\lambda _{i}-0.4%
\end{array}%
\right) ,
\end{equation*}%
which is clearly strictly negative for all $i\geq 0$, and the determinant%
\begin{equation*}
\det \left( 
\begin{array}{cc}
-d_{1}\lambda _{i}-0.4 & 1 \\ 
-1 & -d_{2}\lambda _{i}-0.4%
\end{array}%
\right) ,
\end{equation*}%
which is clearly strictly positive for all $i\geq 0$. Hence, $\xi _{1,2}\in 
%TCIMACRO{\U{211d} }%
%BeginExpansion
\mathbb{R}
%EndExpansion
^{-}$.

\item If $\Delta =\left( d_{1}-d_{2}\right) ^{2}\lambda _{i}^{2}-4<0$, then%
\begin{equation*}
\lambda _{i}<\frac{2}{\left \vert d_{1}-d_{2}\right \vert },\
d_{1}\not=d_{2},.
\end{equation*}%
Hence, the two eigenvalues $\xi _{1,2}$ are complex and may be given by (\ref%
{4.7}).
\end{itemize}

This tells us that if all eigenvalues satisfying (\ref{4.5}) fulfill (\ref%
{4.6}), then the steady state solution is locally asymptotically stable.

\textbf{Part III:} Now that we have established sufficient conditions for
the local asymptotic stability of the zero solution to (\ref{4.9}), we move
to show that it is globally asymptotically stable. Consider the Lyapunov
function%
\begin{equation*}
V=\frac{1}{2}\int \left( e_{1}^{2}+e_{2}^{2}+e_{3}^{2}\right) dx.
\end{equation*}%
By taking the $\delta $ fractional derivative and employing Lemma \ref%
{Lemma4}, we obtain%
\begin{eqnarray*}
D_{t}^{\delta }V &=&\frac{1}{2}\int_{\Omega }\left( D_{t}^{\delta
}e_{1}^{2}+D_{t}^{\delta }e_{2}^{2}+D_{t}^{\delta }e_{3}^{2}\right) dx \\
&\leq &\int_{\Omega }e_{1}D_{t}^{\delta }e_{1}+e_{2}D_{t}^{\delta
}e_{2}+e_{3}D_{t}^{\delta }e_{3}dx \\
&\leq &I+J,
\end{eqnarray*}%
where%
\begin{equation*}
I=-d_{1}\int_{\Omega }\left \vert \nabla e_{1}\right \vert
^{2}dx-d_{2}\int_{\Omega }\left \vert \nabla e_{2}\right \vert
^{2}dx-d_{3}\int_{\Omega }\left \vert \nabla e_{3}\right \vert ^{2}dx<0,
\end{equation*}%
and%
\begin{equation*}
J=-\int_{\Omega }0.4e_{1}^{2}+0.4e_{2}^{2}+\left[ 0.4\right] e_{3}^{2}dx<0,
\end{equation*}

Hence, $D_{t}^{\delta }V<0$ and the zero solution of (\ref{4.9}) is globally
asymptotically stable. The proof is complete.
\end{proof}

\section{Numerical Results\label{SecRes}}

In order to verify the results of the previous section, we use numerical
simulations. We let $\left( a,\alpha \right) =\left( 0.4,0.175\right) $ and%
\begin{equation*}
\left \{ 
\begin{array}{l}
u_{1}\left( x,0\right) =0.349\left[ 1+0.3\cos \left( \frac{x}{2}\right) %
\right] , \\ 
u_{2}\left( x,0\right) =0, \\ 
u_{3}\left( x,0\right) =-0.3\left[ 1+0.3\cos \left( \frac{x}{2}\right) %
\right] .%
\end{array}%
\right. 
\end{equation*}%
Assuming $\delta =0.99$, Figure \ref{Figure12}(left) shows the
synchronization error between the master (\ref{3.1}) and slave (\ref{3.4})
for $\Omega \in \left[ 0,20\right] \times \left[ 0,50\right] $.
Synchronization is achieved by means of the 3D control law (\ref{4.4}). The
errors clearly decay to zero as time progresses indicating successful
synchronization. Figure \ref{Figure12}(right) shows the master and slave
trajectories in phase--space at spatial point $x=10$. The same experiment is
repeated with the different fractional orders%
\begin{equation*}
\left( \delta _{1},\delta _{2},\delta _{3}\right) =\left(
0.97,0.98,0.99\right) .
\end{equation*}%
The results are shown in Figure \ref{Figure14}. Again, as shown
analytically, the numerical results confirm the successful synchronization
of our master--slave pair.

\begin{figure}[tbph]
\centering%
\includegraphics[width=2.5in]{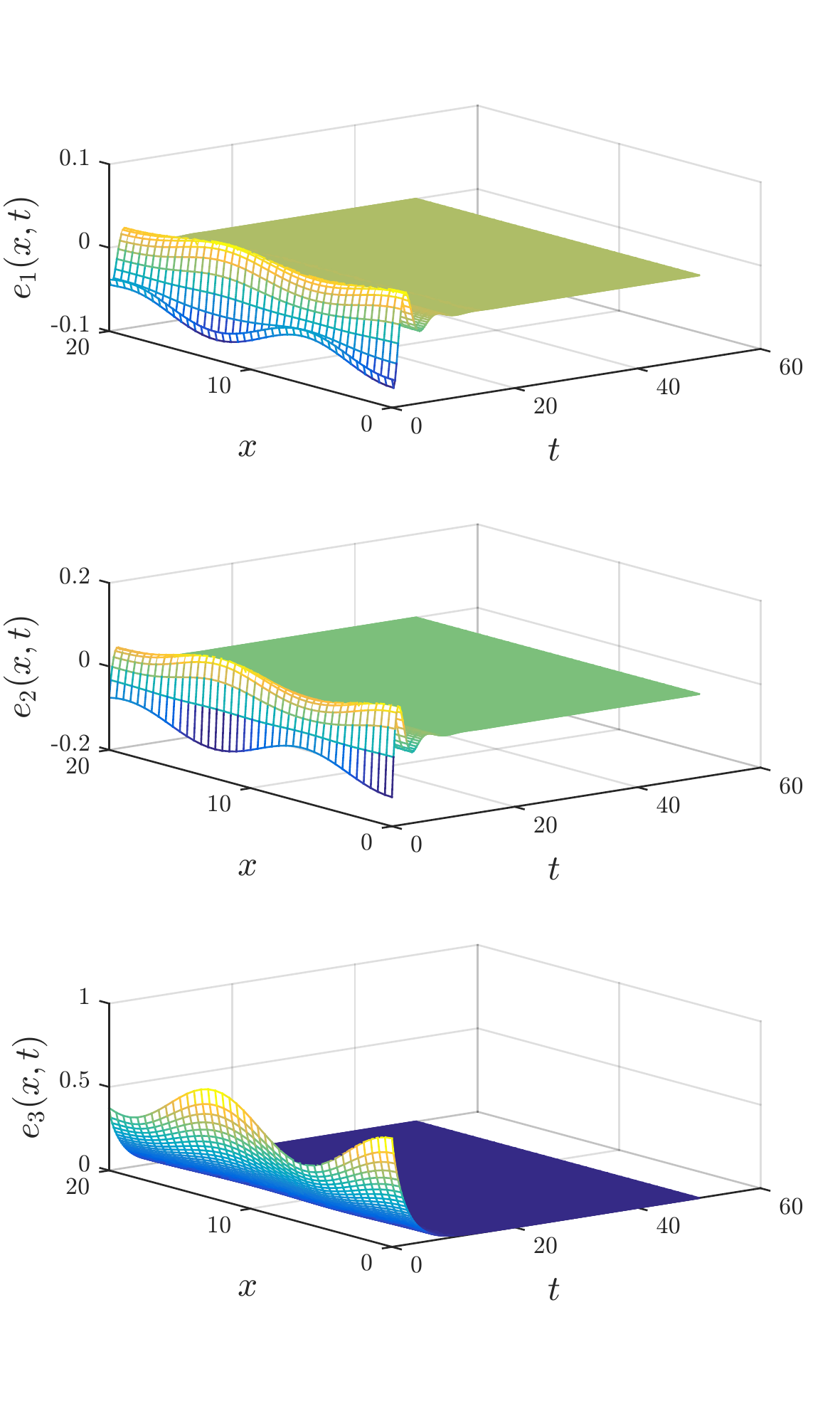}
\includegraphics[width=2.5in]{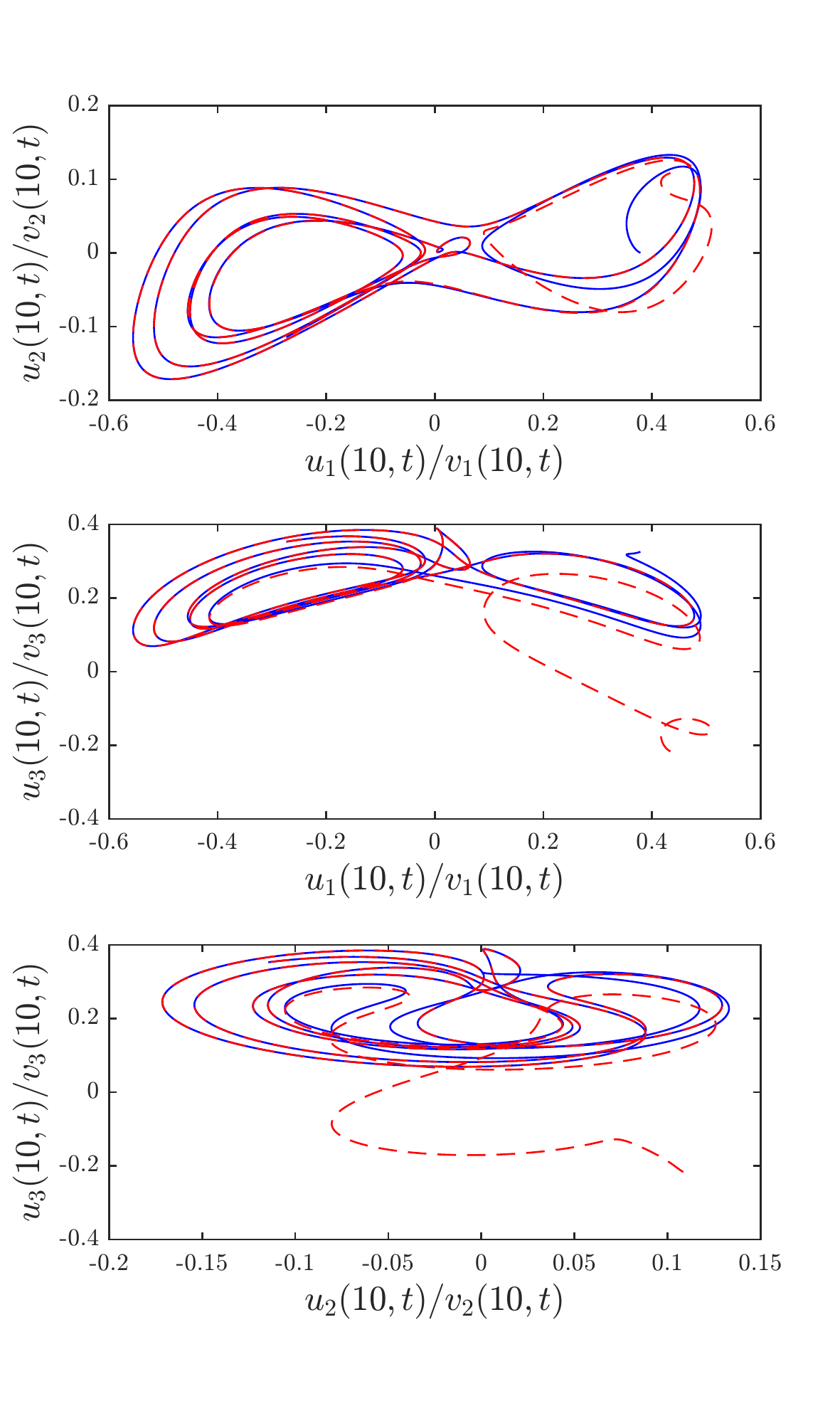}
\caption{Time evolution of the spatio--temporal synchronization errors
(left) and the phase portraits of the master (blue) and slave (red) taken at 
$x=10$ (right) with parameters $\left( a,\protect \alpha \right) =\left(
0.4,0.175\right) $, initial conditions (\protect \ref{3.3.0}), and fractional
order $\protect \delta =0.99$.}
\label{Figure12}
\end{figure}

\begin{figure}[tbph]
\centering%
\includegraphics[width=2.5in]{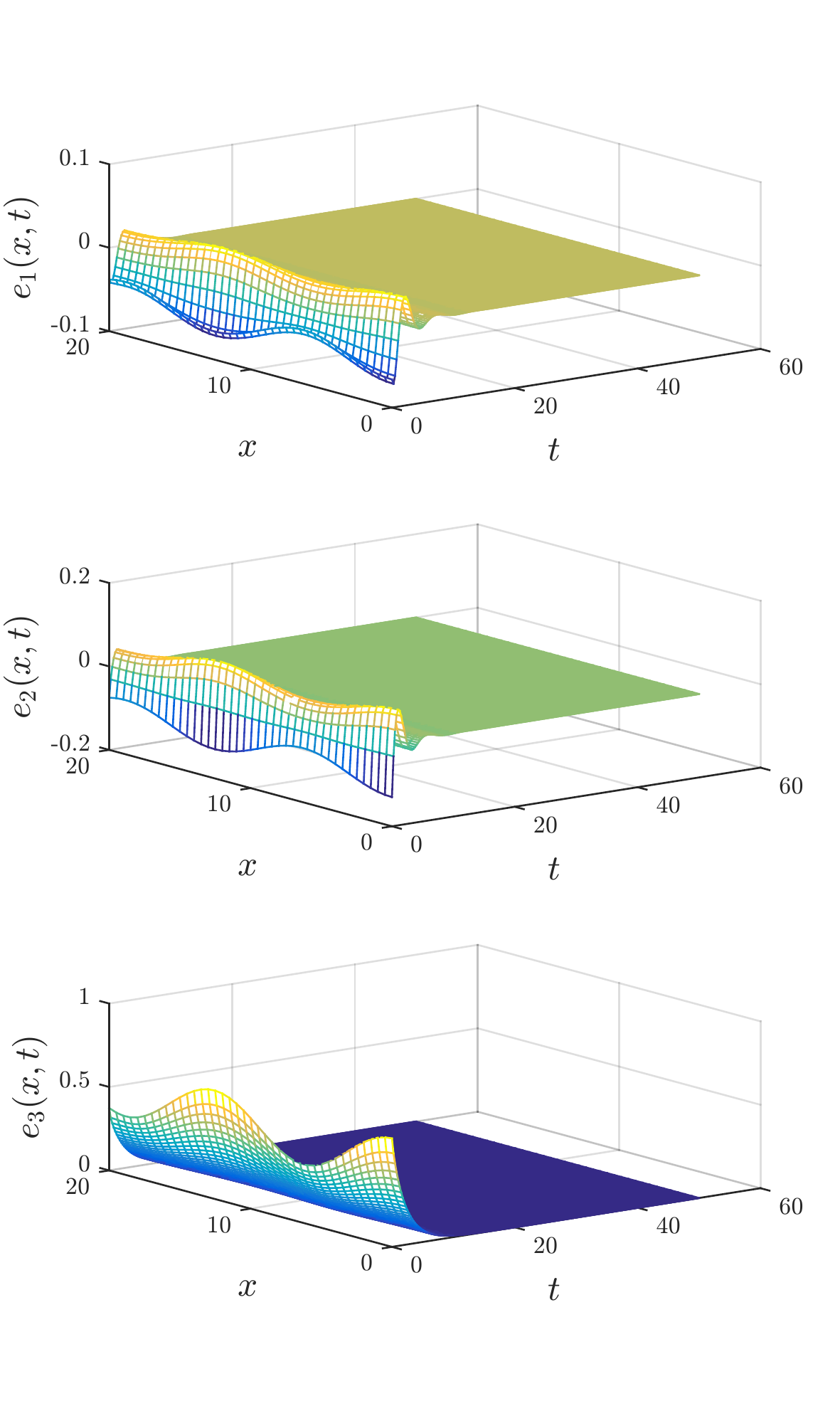}
\includegraphics[width=2.5in]{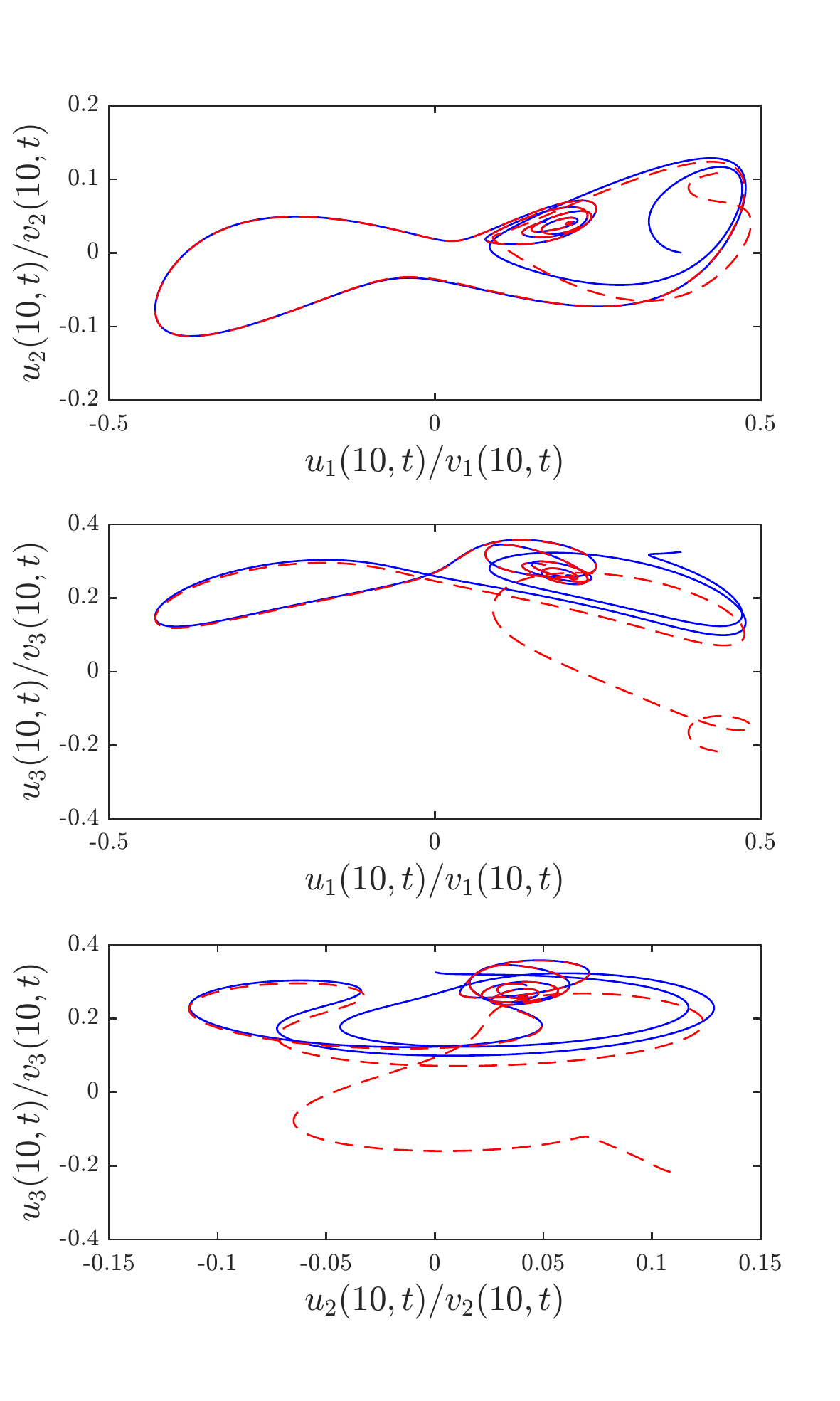}
\caption{Time evolution of the spatio--temporal synchronization errors
(left) and the phase portraits of the master (blue) and slave (red) taken at 
$x=10$ (right) with parameters $\left( a,\protect \alpha \right) =\left(
0.4,0.175\right) $, initial conditions (\protect \ref{3.3.0}), and fractional
orders $\left( \protect \delta _{1},\protect \delta _{2},\protect \delta %
_{3}\right) =\left( 0.97,0.98,0.99\right) $.}
\label{Figure14}
\end{figure}

\section{Concluding Remarks}

In this paper, we have considered a time--fractional spatio--temporal system
based on the Newton--Leipnik chaotic system. We started by giving a brief
overview of the most important definitions and theory related to fractional
dynamical system. Then, we reviewed some important aspects of the standard
and fractional Newton--Leipnik systems in the free diffusion scenario. The
main result of the paper concerns the global complete synchronization of a
master--slave pair of the proposed system. We established sufficient
conditions for the asymptotic convergence of the synchronization errors to
zero by means of local and global asymptotic stability methods. Throughout
the paper, we have used numerical simulations to illustrate the findings of
our study.

\section*{References}


\begin{thebibliography}{99}
\bibitem{Kocarev2001} L. Kocarev, Chaos-based cryptography: A brief
overview, IEEE Circuits Systems Magazine, Vol. 1(3) (2001), pp. 6--21.

\bibitem{Dachselt2001} F. Dachselt, W. Schwarz, Chaos and cryptography, IEEE
Trans. Circuits Systems I: Fund. Theory Appl., Vol. 48(12) (2001), pp.
1498--1509.

\bibitem{Masuda2002} N. Masuda, K. Aihara, Cryptosystems with discretized
chaotic maps, IEEE Trans. Circuits Systems I: Fund. Theory Appl., Vol. 49(1)
(2002), pp. 28--40.

\bibitem{Lawande2005} Q.V. Lawande, B.R. Ivan, S.D. Dhodapkar, Chaos Based
Cryptography: A New Approach to Secure Communications, BARC Newsletter,
Bombay (2005).

\bibitem{Masuda2006} N. Masuda, G. Jakimoski, K. Aihara, L. Kocarev, Chaotic
block ciphers: from theory to practical algorithms, IEEE Transactions on
Circuits and Systems I: Regular Papers, Vol. 53 (2006), pp. 1341--1352.

\bibitem{Yamada1983} T. Yamada, H. Fujisaca, Stability theory of
synchronized motion in coupled-oscillator, Systems. II. Prog. Theor. Phys,
Vol. 70 (1983).

\bibitem{Yamada1984} T. Yamada, H. Fujisaca, Stability theory of
synchronized motion in coupled-oscillator, Systems. III. Prog. Theor. Phys,
Vol. 72 (1984).

\bibitem{Afraimovich1983} V. S. Afraimovich, N. N. Verochev, M. I.
Robinovich, Stochastic synchronization of oscillations in dissipative
systems, Radio. Phys. and Quantum Electron, Vol. 29 (1983), pp. 795--803.

\bibitem{Pecora1990} L.M. Pecora, T.L. Carrol, Synchronization in chaotic
systems, Phys. Rev. A, Vol. 64, pp. 821--824, 1990.

\bibitem{Martinez-Guerra2014} R. Martinez-Guerra, J.L. Mata-Machuca,
Fractional generalized synchronization in a class of nonlinear fractional
order systems, Nonlinear Dynamics, Vol. 77 (2014), pp. 1237--1244.

\bibitem{Mahmoud2016} G.M. Mahmoud, T.M. Abed-Elhameed, M.E. Ahmed,
Generalization of combination--combination synchronization of chaotic $n$%
-dimensional fractional--order dynamical systems, Nonlinear Dynamics, Vol.
83(4) (2016), pp. 1885--93.

\bibitem{Maheri2016} M. Maheri, N. Arifin, Synchronization of two different
fractional--order chaotic systems with unknown parameters using a robust
adaptive nonlinear controller, Nonlinear Dynamics, Vol. 85(2) (2016), pp.
825--38.

\bibitem{Cross1993} M.C. Cross, P. C. Hohenberg, Pattern formation outside
of equilibrium, Reviews of Modern Physics, Vol. 65(3) (1993), pp. 851--1112.

\bibitem{Lai1994} Y.C. Lai, R.L. Winslow, Extreme sensitive dependence on
parameters and initial conditions in spatio-temporal chaotic dynamical
systems, Physica D: Nonlinear Phenomena, Vol. 74(3--4) (1994), pp. 353--371.

\bibitem{Parekh1997} N. Parekh, V.R. Kumar, B.D. Kulkarni, Control of
spatiotemporal chaos: A study with an autocatalytic reaction-diffusion
system, Pramana J. Physics, Vol. 48(1) (1997), pp. 303--323.

\bibitem{Zelik2007} S.V. Zelik, Spatial and dynamical chaos generated by
reaction--diffusion systems in unbounded domains, J. Dyn. Diff. Eqs., Vol.
19(1) (2007), pp. 1--74.

\bibitem{Wang2007} Y. Wang, J. Cao, Synchronization of a class of delayed
neural networks with reaction--diffusion terms, Physics Letters A, Vol. 369
(2007), pp. 201--211.

\bibitem{Yu2011} F. Yu, H. Jiang, Global exponential synchronization of
fuzzy cellular neural networks with delays and reaction--diffusion terms,
Neurocomputing, Vol. 74 (2011), pp. 509--515.

\bibitem{Yang2013} X. Yang, J. Cao, Z. Yang, Synchronization of coupled
reaction--diffusion neural networks with time--varying delays via pinning
impulsive control, SIAM J. Cont. Optim., Vol. 51(5) (2013), pp. 3486--3510.

\bibitem{Hu2015} G. Hu, X. Li, Y. Wang, Pattern formation and spatiotemporal
chaos in a reaction--diffusion predator--prey system, Nonlinear Dyn, Vol.
81(1--2) (2015), pp. 265--275.

\bibitem{Zaitseva2016} M.F. Zaitseva, N.A. Magnitskii, N.B. Poburinnaya,
Control of Space-Time Chaos in a System of Equations of the FitzHugh--Nagumo
Type, Diff. Eqs., Vol. 52(12) (2016), pp. 1585--1593.

\bibitem{Zaitseva2017} M.F. Zaitseva, N.A. Magnitskii, Space--Time Chaos in
a System of Reaction--Diffusion Equations, Diff. Eqs., Vol. 53(11) (2017),
pp. 1519--1523.

\bibitem{Leipnik1981} R.B. Leipnik, T.A. Newton, Double strange attractors
in rigid body motion with linear feedback control, Phys Lett A, Vol. 86
(1981), pp. 63--7.

\bibitem{Wolf1985} A. Wolf, J. Swift, H. Swinney, J. Vastano, Determining
Lyapunov exponents from a time series, Physica D, Vol. 16 (1985), pp.
285--317.

\bibitem{Qiang2008} J. Qiang, Chaos control and synchronization of the
Newton--Leipnik chaotic system, Chaos, Solitons and Fractals, Vol. 35
(2008), pp. 814--824.

\bibitem{Jovic2011} B. Jovic, Synchronization Techniques for Chaotic
Communication Systems, Springer-Verlag Berlin Heidelberg (2011).

\bibitem{Bendoukha2018a} S. Bendoukha, S. Abdelmalek, Complete
synchronization of the Newton--Leipnik reaction diffusion chaotic system, to
appear.

\bibitem{Bendoukha2018b} S. Bendoukha, On the existence of chaos and
complete synchronization of the fractional--order Newton--Leipnik chaotic
system, to appear.

\bibitem{Kang2007} Y. Kanga, K.T. Lina, J.H. Chenb, L.J. Sheub, H.K. Chen,
Parametric analysis of a fractional-order Newton-Leipnik system, Journal of
Physics: Conference Series, Vol. 96 (2008), 012140.

\bibitem{Sheu2008} L.J. Sheu, H.K. Chen, J.H. Chen, L.M. Tam, W.C. Chen,
K.T. Lin, Y. Kang, Chaos in the Newton--Leipnik system with fractional
order, Chaos, Solitons and Fractals, Vol. 36 (2008), pp. 98--103.

\bibitem{Khan2017} A. Khan, TYAGI, Arti. Fractional order disturbance
observer based adaptive sliding mode hybrid projective synchronization of
fractional order Newton--Leipnik chaotic system. International Journal of
Dynamics and Control, 2017, 1-14.

\bibitem{Kilbas2006} A. Kilbas, H. Srivastava, J. Trujillo, Theory and
applications of fractional differential equations, Elsevier, (2006).

\bibitem{Li2010} Y. Li, Y.Q. Chen, I. Podlubny, Stability of
fractional-order nonlinear dynamic systems: Lyapunov direct method and
generalized Mittag--Leffler stability, Computers \& Mathematics with
Applications, Vol. 59(5) (2010), pp. 1810--1821.

\bibitem{AguilaCamacho2014} N. Aguila--Camacho, M.A. Duarte--Mermoud, J.A.
Gallegos, Lyapunov functions for fractional order systems, Commun Nonlinear
Sci Numer Simulat, Vol. 19 (2014), pp. 2951--2957.

\bibitem{Matignon1996} D. Matignon, Stability results for fractional
differential equations with applications to control processing, Proceedings
of the IMACS--SMC, Vol. 2 (1996), pp. 963--968.

\bibitem{Deng2007} W. Deng, C, Li, J. Lu, Stability analysis of linear
fractional differential system with multiple time delays, Nonlinear
Dynamics, Vol. 48 (2007), pp. 409--416.
\end{thebibliography}
\end{document}